\newtheorem{thm}{Theorem}[section]
\newtheorem{prop}{Proposition}[section]
\newtheorem{lem}{Lemma}[section]
\newtheorem{cor}{Corollary}[section]
\newtheorem{ex}{Example}[section]
\newtheorem{rem}{Remark}[section]
\newtheorem*{lemma}{\it{Lemma}}
\begin{document}

\title{Shadowing and the continuity of omega-limit sets}
\author{Noriaki Kawaguchi}
\subjclass[2020]{37B65}
\keywords{shadowing, shadowable points, omega-limit sets, upper and lower semicontinuity, chain continuity}
\address{Department of Mathematical and Computing Science, School of Computing, Institute of Science Tokyo, 2-12-1 Ookayama, Meguro-ku, Tokyo 152-8552, Japan}
\email{gknoriaki@gmail.com}

\begin{abstract}
This paper examines the relationship between shadowing phenomena and the continuity properties of $\omega$-limit sets in dynamical systems. We give a necessary and sufficient condition for a shadowable point to be an upper (resp.\:a lower) semicontinuity point of $\omega$-limit sets. Assuming global shadowing, we show that the lower semicontinuity of $\omega$-limit sets is equivalent to the chain continuity. We also show that the lower semicontinuity of $\omega$-limit sets is equivalent to the chain continuity in a general setting. Several examples are given to illustrate the results.
\end{abstract}

\maketitle

\markboth{NORIAKI KAWAGUCHI}{Shadowing and the continuity of omega-limit sets}

\section{Introduction}

{\em Shadowing} is an important concept in the topological theory of dynamical systems, linking the behavior of coarse orbits, or {\em pseudo-orbits}, to that of true orbits. Since its introduction in the context of hyperbolic dynamics, various shadowing properties have been the subject of extensive research (see \cite{AH,P} for background). In \cite{M}, by localizing  global shadowing to individual points, Morales introduced the notion of {\em shadowable points}, which gives a method for a local study of shadowing phenomena. The shadowable points and their relationship to other dynamical properties are the subject of ongoing research.

The {\em $\omega$-limit set} is the set of points to which the orbit of each point accumulates. Its continuity as a set-valued map can be interpreted as a kind of stability of the orbits. In the previous studies, several results in \cite{A} concern global shadowing and the continuity of $\omega$-limit sets. In this paper, we study the (semi-)continuity of $\omega$-limit sets, relating it to shadowable points and also global shadowing. Our results improve those in \cite{A} considerably.

We begin by defining the (semi-)continuity of set-valued maps. Throughout, $X$ denotes a compact metric space endowed with a metric
\[
d\colon X\times X\to[0,\infty).
\]
We denote by $\mathcal{K}(X)$ the set of non-empty closed subsets of $X$. For a map $\phi\colon X\to\mathcal{K}(X)$ and $x\in X$, we say that $\phi$ is
\begin{itemize}
\item {\em upper semicontinuous} at $x$ if for every open subset $U$ of $X$ with $\phi(x)\subset U$, there is $\delta>0$ such that $d(x,y)\le\delta$ implies $\phi(y)\subset U$ for all $y\in X$,
\item {\em lower semicontinuous} at $x$ if for every open subset $U$ of $X$ with $\phi(x)\cap U\ne\emptyset$, there is $\delta>0$ such that $d(x,y)\le\delta$ implies $\phi(y)\cap U\ne\emptyset$ for all $y\in X$.
\end{itemize}
We denote by $USC(\phi)$ (resp.\:$LSC(\phi)$) the set of upper (resp.\:lower) semicontinuity points for $\phi$. For a non-empty subset $S$ of $X$ and $x\in X$, let $d(x,S)$ denote the distance of $x$ from $S$:
\[
d(x,S)=\inf_{y\in S}d(x,y).
\]
The {\em Hausdorff distance} $d_H\colon\mathcal{K}(X)\times\mathcal{K}(X)\to[0,\infty)$ is define by
\[
d_H(A,B)=\max\{\sup_{a\in A}d(a,B),\sup_{b\in B}d(b,A)\}
\]
for all $A,B\in\mathcal{K}(X)$. We know that $(\mathcal{K}(X),d_H)$ is a compact metric space. For a map $\phi\colon X\to\mathcal{K}(X)$, we denote by $C(\phi)$ the set of continuity points for $\phi$ (with respect to $d_H$). Note that
\[
C(\phi)=USC(\phi)\cap LSC(\phi).
\]

Given a continuous map $f\colon X\to X$ and $x\in X$, we denote
\begin{itemize}
\item by $\omega_f(x)$ the {\em $\omega$-limit set} of $x$ for $f$, i.e., the set of $y\in X$ such that $\lim_{j\to\infty}f^{i_j}(x)=y$ for some $0\le i_1<i_2<\cdots$,
\item by $\Omega_f(x)$ the set of $y\in X$ such that $\lim_{j\to\infty}x_j=x$ and $\lim_{j\to\infty}f^{i_j}(x_j)=y$ for some $x_j\in X$, $j\ge1$, and $0\le i_1<i_2<\cdots$.
\end{itemize}
For $\delta>0$, a sequence $(x_i)_{i=0}^k$ of points in $X$, where $k\ge1$, is said to be a {\em $\delta$-chain} of $f$ if
\[
\sup_{0\le i\le k-1}d(f(x_i),x_{i+1})\le\delta.
\]
We denote by $\omega_f^\ast(x)$ the set of $y\in X$ such that for any $\delta>0$ and $l\ge1$, there is a $\delta$-chain $(x_i)_{i=0}^k$ of $f$ with $x_0=x$, $x_k=y$, and $l\le k$. For $y,z\in X$, the notation $y\rightarrow z$ means that for every $\delta>0$, there is a  $\delta$-chain $(x_i)_{i=0}^k$ of $f$ with $x_0=y$ and $x_k=z$. We say that a closed subset $S$ of $X$ with $f(S)\subset S$ is {\em chain stable} if $y\rightarrow z$ implies $z\in S$ for all $y\in S$ and $z\in X$, or equivalently, for any $\epsilon>0$, there is $\delta>0$ such that every $\delta$-chain $(x_i)_{i=0}^k$ of $f$ with $x_0\in S$ satisfies $d(x_k,S)\le\epsilon$. Note that  
\begin{itemize}
\item $S$ is a closed subset of $X$ and satisfies $f(S)=S$ for all $S\in\{\omega_f(x),\Omega_f(x),\omega_f^\ast(x)\}$,
\item $\omega_f(x)\subset\Omega_f(x)\subset\omega_f^\ast(x)$,
\item $\lim_{i\to\infty}d(f^i(x),\omega_f(x))=0$,
\item $x\rightarrow y$ for all $y\in\omega_f(x)$,
\item $y\rightarrow z$ for all $y\in\omega_f(x)$ and $z\in\omega_f^\ast(x)$; therefore,
\[
\omega_f^\ast(x)=\{z\in X\colon y\rightarrow z\:\:\text{for some $y\in\omega_f(x)$}\},
\]
\item $\omega_f^\ast(x)$ is chain stable.
\end{itemize}

\begin{rem}
\normalfont
By \cite[Proposition 7.22]{A1}, we know that for any continuous map $f\colon X\to X$,
\[
\{x\in X\colon\omega_f(x)=\Omega_f(x)\}
\]
is a dense $G_\delta$-subset of $X$.
\end{rem}

The definition of shadowable points is as follows. Let $f\colon X\to X$ be a continuous map and let $\xi=(x_i)_{i\ge0}$ be a sequence of points in $X$. For $\delta>0$, $\xi$ is said to be a {\em $\delta$-pseudo orbit} of $f$ if
\[
\sup_{i\ge0}d(f(x_i),x_{i+1})\le\delta.
\]
For $y\in X$ and $\epsilon>0$, $\xi$ is said to be {\em $\epsilon$-shadowed} by $y$ if
\[
\sup_{i\ge0}d(x_i,f^i(y))\le\epsilon.
\]
Given $x\in X$, $x$ is said to be a {\em shadowable point} for $f$ if for any $\epsilon>0$, there is $\delta>0$ such that every $\delta$-pseudo orbit $(x_i)_{i\ge0}$ of $f$ with $x_0=x$ is $\epsilon$-shadowed by some $y\in X$. We denote by $Sh(f)$ the set of shadowable points for $f$.

\begin{rem}
For any continuous map $f\colon X\to X$ and $x\in X$, if $x\in Sh(f)$, then $\Omega_f(x)=\omega_f^\ast(x)$.
\end{rem}

This paper consists of seven sections and an appendix.

\begin{itemize}
\item In Section 2 (resp.\:3), we give a necessary and sufficient condition for a shadowable point to be an upper (resp.\:a lower) semicontinuity point of $\omega$-limit sets.
\item In Section 4, we present several corollaries of the results in the previous sections, introducing the notions of chain recurrence, chain components, and chain continuity. We also introduce some notations, definitions, and facts that are used in subsequent sections.
\item In Section 5, we show that under the assumption of global shadowing, the lower semicontinuity of $\omega$-limit sets is equivalent to the chain continuity. We also prove a general result about chain continuity.
\item In Section 6, we show that the lower semicontinuity of $\omega$-limit sets is equivalent to the chain continuity in a general setting.
\item In Section 7, we present several examples to illustrate the results.
\item In Appendix A, we show that the upper semicontinuity of $\omega$-limit sets implies the lower semicontinuity of them.
\end{itemize}

\section{Upper semicontinuity}

The aim of this section is to prove the following theorem. 

\begin{thm}
Given a continuous map $f\colon X\to X$ and $x\in Sh(f)$, the following conditions are equivalent
\begin{itemize}
\item $x\in USC(\omega_f)$,
\item $\omega_f(x)=\Omega_f(x)$.
\end{itemize}
\end{thm}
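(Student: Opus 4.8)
The plan is to prove the two implications separately. Only ``$x\in USC(\omega_f)\Rightarrow\omega_f(x)=\Omega_f(x)$'' will genuinely use $x\in Sh(f)$; the other implication holds for every continuous $f$. For ``$\omega_f(x)=\Omega_f(x)\Rightarrow x\in USC(\omega_f)$'' I would argue by contraposition, using the general fact that every subsequential limit $w$ of points $w_n\in\omega_f(y_n)$ with $y_n\to x$ lies in $\Omega_f(x)$: picking $i_n$ increasing with $d(f^{i_n}(y_n),w_n)<1/n$ gives $f^{i_n}(y_n)\to w$ while $y_n\to x$. Concretely, if $x\notin USC(\omega_f)$ is witnessed by an open $U\supset\omega_f(x)$ and $y_n\to x$ with $w_n\in\omega_f(y_n)\setminus U$, then a subsequential limit $w$ of $(w_n)$ lies in $(X\setminus U)\cap\Omega_f(x)$, so $\omega_f(x)\subsetneq\Omega_f(x)$ (recall $\omega_f(x)\subset\Omega_f(x)$ always).

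For ``$x\in USC(\omega_f)\Rightarrow\omega_f(x)=\Omega_f(x)$'' I would again argue by contraposition: assume $z\in\Omega_f(x)\setminus\omega_f(x)$, so $z\in\omega_f^\ast(x)$ since $\Omega_f(x)\subset\omega_f^\ast(x)$. The goal is to produce, for every $\delta>0$, a $\delta$-pseudo orbit of $f$ starting at $x$ that passes at strictly increasing times through a single point $w\notin\omega_f(x)$; then shadowability at $x$ gives genuine orbits of points $y$ arbitrarily close to $x$ whose $\omega$-limit sets meet a fixed neighbourhood of $w$ disjoint from a neighbourhood of $\omega_f(x)$, contradicting upper semicontinuity at $x$. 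To build the pseudo orbit, fix $z_0\in\omega_f(z)$; then $z_0\in\omega_f^\ast(x)$ (the forward orbit of $z$ lies in $\omega_f^\ast(x)$, which is closed and invariant), and $z_0\rightarrow z_0$, which I would check directly by splicing an orbit segment of $z$ between two near-returns to $z_0$, using uniform continuity of $f$ to control the single error at the splice. Now split into cases. If $z_0\notin\omega_f(x)$, set $w=z_0$ and concatenate a $\delta$-chain from $x$ to $z_0$ (which exists since $x\rightarrow y$ for all $y\in\omega_f(x)$ and $\omega_f^\ast(x)=\{v:y\rightarrow v\text{ for some }y\in\omega_f(x)\}$) with infinitely many copies of a $\delta$-chain from $z_0$ to $z_0$. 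If $z_0\in\omega_f(x)$, set $w=z$ and use that $z\rightarrow z_0$ (orbit of $z$) and $z_0\rightarrow z$ (the listed fact, since $z_0\in\omega_f(x)$ and $z\in\omega_f^\ast(x)$); concatenate a $\delta$-chain from $x$ to $z$ with infinitely many copies of (a $\delta$-chain from $z$ to $z_0$ followed by a $\delta$-chain from $z_0$ to $z$). Since $\delta$-chains sharing an endpoint concatenate into a $\delta$-pseudo orbit with no extra error, and each piece has length at least one, in either case this produces the desired pseudo orbit visiting $w\notin\omega_f(x)$ infinitely often.

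To conclude, given $x\in USC(\omega_f)$, put $r=d(w,\omega_f(x))>0$ and $U=\{v:d(v,\omega_f(x))<r/2\}\supset\omega_f(x)$, take $\delta^\ast>0$ with $d(x,y)\le\delta^\ast\Rightarrow\omega_f(y)\subset U$, and set $\epsilon_0=\min\{\delta^\ast,r/4\}$. Applying $x\in Sh(f)$ to $\epsilon_0$ yields $\delta_0>0$; the $\delta_0$-pseudo orbit constructed above is $\epsilon_0$-shadowed by some $y$ with $d(x,y)\le\epsilon_0\le\delta^\ast$, so $\omega_f(y)\subset U$; but taking a subsequential limit of the shadowing orbit along the (infinitely many) times when the pseudo orbit equals $w$ gives a point of $\omega_f(y)$ within $\epsilon_0\le r/4$ of $w$, hence at distance $\ge 3r/4$ from $\omega_f(x)$, i.e.\ outside $U$ — a contradiction. (Alternatively one may note that the same construction shows $x\in USC(\omega_f)\Rightarrow\omega_f(x)=\omega_f^\ast(x)$ and then invoke $\Omega_f(x)=\omega_f^\ast(x)$ for $x\in Sh(f)$.)

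The step I expect to be the main obstacle is precisely the construction of this ``returning'' pseudo orbit when $z$ itself is not chain recurrent: one cannot simply loop at $z$, and must pass to $z_0\in\omega_f(z)$ and distinguish the two cases, closing the loop via $z_0\rightarrow z$ (valid exactly because $z_0\in\omega_f(x)$ and $z\in\omega_f^\ast(x)$) rather than via $z\rightarrow z$ when $z_0\in\omega_f(x)$.
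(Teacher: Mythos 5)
Your proposal is correct, and its overall strategy coincides with the paper's: the easy implication is proved exactly as in the paper's Lemma 2.1 (a subsequential limit of $w_n\in\omega_f(y_n)$ with $y_n\to x$ lies in $\Omega_f(x)$), and the hard implication is proved, as in Lemma 2.2, by building a $\delta$-pseudo orbit from $x$ whose tail is a concatenation of chain loops passing infinitely often through a point $w\notin\omega_f(x)$, shadowing it, and contradicting upper semicontinuity. The one genuine point of divergence is how the returning loop is closed. The paper fixes $y\in\Omega_f(x)\setminus\omega_f(x)$ and $z\in\omega_f(x)$ and invokes $x\in USC(\omega_f)$ a second time to obtain $y\rightarrow w$ for some $w\in\omega_f(x)$, whence $y\rightarrow z$ and the loop $\beta\gamma$ between $z$ and $y$. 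You instead pass to $z_0\in\omega_f(z)$ and split cases: if $z_0\notin\omega_f(x)$ you loop at $z_0$ using chain recurrence of $\omega$-limit points, and if $z_0\in\omega_f(x)$ you close the loop $z\rightarrow z_0\rightarrow z$ via the listed fact that points of $\omega_f(x)$ chain to every point of $\omega_f^\ast(x)$. This uses $USC$ only once (in the final contradiction), is slightly more self-contained at that step, and, as you note, directly yields the stronger conclusion $\omega_f(x)=\omega_f^\ast(x)$, which is the content of the paper's Corollary 2.1; the price is the extra case analysis, which the paper's single use of $USC$ avoids. All the individual steps you cite ($z_0\rightarrow z_0$ by splicing near-returns, $x\rightarrow z_0$ via $\omega_f^\ast(x)=\{v\colon y\rightarrow v\ \text{for some}\ y\in\omega_f(x)\}$, and the quantitative bookkeeping with $r$, $\delta^\ast$, $\epsilon_0$) check out.
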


Let us first prove the following lemma.

\begin{lem}
For any continuous map $f\colon X\to X$,
\[
\{x\in X\colon\omega_f(x)=\Omega_f(x)\}\subset USC(\omega_f).
\] 
\end{lem}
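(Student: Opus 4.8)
The plan is to argue by contradiction, using only the compactness of $X$ and the definitions of $\omega_f$ and $\Omega_f$. Suppose $x\in X$ satisfies $\omega_f(x)=\Omega_f(x)$ but $x\notin USC(\omega_f)$. Then there is an open set $U\subset X$ with $\omega_f(x)\subset U$ together with a sequence $(y_j)_{j\ge1}$ in $X$ such that $\lim_{j\to\infty}y_j=x$ and $\omega_f(y_j)\not\subset U$ for every $j\ge1$. For each $j$ fix a point $z_j\in\omega_f(y_j)\setminus U$. Since $X\setminus U$ is a compact subset of $X$, after passing to a subsequence we may assume that $z_j$ converges to some $z\in X\setminus U$; in particular $z\notin\omega_f(x)$.

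Next I would produce the data witnessing $z\in\Omega_f(x)$. Because $z_j\in\omega_f(y_j)$, for every $j\ge1$ and every $N\ge1$ there is $i\ge N$ with $d(f^i(y_j),z_j)\le1/j$. Choosing $i_j$ inductively so that $i_j>i_{j-1}$ and $d(f^{i_j}(y_j),z_j)\le1/j$, we obtain $0\le i_1<i_2<\cdots$ with
\[
d(f^{i_j}(y_j),z)\le d(f^{i_j}(y_j),z_j)+d(z_j,z)\xrightarrow[j\to\infty]{}0,
\]
hence $\lim_{j\to\infty}f^{i_j}(y_j)=z$. Combined with $\lim_{j\to\infty}y_j=x$, this gives $z\in\Omega_f(x)$ by definition.

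Finally, the hypothesis $\Omega_f(x)=\omega_f(x)$ forces $z\in\omega_f(x)\subset U$, which contradicts $z\in X\setminus U$. This contradiction establishes $x\in USC(\omega_f)$. The argument is essentially a routine compactness extraction; the only point that needs a little care is arranging the times $i_j$ to be strictly increasing while keeping $f^{i_j}(y_j)$ close to $z_j$, and this is immediate from the definition of $\omega_f(y_j)$ as a set of limit points along arbitrarily large times. So I do not anticipate a genuine obstacle here.
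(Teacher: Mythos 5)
Your proposal is correct and follows essentially the same route as the paper: assume upper semicontinuity fails at $x$, extract a convergent sequence of points $z_j\in\omega_f(y_j)\setminus U$ with $y_j\to x$, observe that the limit lies in $\Omega_f(x)\setminus\omega_f(x)$, and contradict $\omega_f(x)=\Omega_f(x)$. The only difference is that you spell out the construction of the strictly increasing times $i_j$ witnessing membership in $\Omega_f(x)$, which the paper leaves implicit.
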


\begin{proof}
Let $x\in X$ and $\omega_f(x)=\Omega_f(x)$. If $x\not\in USC(\omega_f)$, then there are an open subset $U$ of $X$, $x_j,y_j\in X$, $j\ge1$, and $y\in X$ such that
\begin{itemize}
\item $\omega_f(x)\subset U$,
\item $\lim_{j\to\infty}x_j=x$,
\item $y_j\in\omega_f(x_j)\setminus U$ for all $j\ge1$,
\item $\lim_{j\to\infty}y_j=y$.
\end{itemize}
It follows that
\[
y\in\Omega_f(f)\setminus U\subset\Omega_f(f)\setminus\omega_f(x),
\]
a contradiction; therefore, the lemma has been proved.
\end{proof}

Theorem 2.1 is a direct consequence of Lemma 2.1 and the next lemma.

\begin{lem}
For any continuous map $f\colon X\to X$,
\[
Sh(f)\cap USC(\omega_f)\subset\{x\in X\colon\omega_f(x)=\Omega_f(x)\}.
\] 
\end{lem}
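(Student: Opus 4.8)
The plan is to prove the stronger inclusion $Sh(f)\cap USC(\omega_f)\subset\{x\in X\colon\omega_f(x)=\omega_f^\ast(x)\}$; since $\omega_f(x)\subset\Omega_f(x)\subset\omega_f^\ast(x)$ always holds, this yields $\omega_f(x)=\Omega_f(x)$. So I would fix $x\in Sh(f)\cap USC(\omega_f)$, assume toward a contradiction that $\omega_f^\ast(x)\setminus\omega_f(x)\ne\emptyset$, pick $y$ in this set, and first establish the following claim: there is a \emph{chain recurrent} point $v\in\omega_f^\ast(x)\setminus\omega_f(x)$, i.e.\ a point of $\omega_f^\ast(x)\setminus\omega_f(x)$ with $v\rightarrow v$.

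For the claim I would split into two cases. If $\omega_f(y)\not\subset\omega_f(x)$, pick any $v\in\omega_f(y)\setminus\omega_f(x)$: since $\omega_f^\ast(x)$ is closed, satisfies $f(\omega_f^\ast(x))=\omega_f^\ast(x)$, and contains $y$, it contains the orbit of $y$ and hence $\omega_f(y)$, so $v\in\omega_f^\ast(x)$; and the listed fact ``$a\rightarrow b$ for all $a\in\omega_f(w)$ and $b\in\omega_f^\ast(w)$'', applied with $w=y$ and $a=b=v$ (using $v\in\omega_f(y)\subset\omega_f^\ast(y)$), gives $v\rightarrow v$. If instead $\omega_f(y)\subset\omega_f(x)$, take $v=y\in\omega_f^\ast(x)\setminus\omega_f(x)$ and check it is chain recurrent: for any $u\in\omega_f(y)$ one has $y\rightarrow u$ (as $u\in\omega_f(y)$) and $u\rightarrow y$ (as $u\in\omega_f(y)\subset\omega_f(x)$ and $y\in\omega_f^\ast(x)$), so $y\rightarrow y$ by transitivity of $\rightarrow$. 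This proves the claim.

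With $v$ in hand, I would set $\rho=\tfrac{1}{2}d(v,\omega_f(x))>0$ and $U=\{p\in X\colon d(p,\omega_f(x))<\rho\}$, so $\omega_f(x)\subset U$ and $d(v,\omega_f(x))=2\rho$, and show that $U$ witnesses $x\notin USC(\omega_f)$. Given $\epsilon\in(0,\rho)$, choose $\delta>0$ so that every $\delta$-pseudo orbit of $f$ starting at $x$ is $\epsilon$-shadowed (possible as $x\in Sh(f)$). Since $v\in\omega_f^\ast(x)$ there is $w\in\omega_f(x)$ with $w\rightarrow v$, and $x\rightarrow w$, so $x\rightarrow v$; thus there is a $\delta$-chain from $x$ to $v$ and, since $v\rightarrow v$, a $\delta$-chain from $v$ to $v$ of length $\ge1$. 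Following the first $\delta$-chain and then repeating the second ($\delta$-chain loop at $v$) forever produces a $\delta$-pseudo orbit $\xi=(\xi_i)_{i\ge0}$ with $\xi_0=x$ and $\xi_i=v$ for infinitely many $i$. Taking $z\in X$ that $\epsilon$-shadows $\xi$, one gets $d(x,z)\le\epsilon$ and $d(v,f^i(z))\le\epsilon$ along those infinitely many $i$, so by compactness a subsequence of the corresponding iterates of $z$ converges to some $p\in\omega_f(z)$ with $d(p,v)\le\epsilon$, whence $d(p,\omega_f(x))\ge2\rho-\epsilon>\rho$ and $p\notin U$. Thus $\omega_f(z)\not\subset U$ while $d(x,z)\le\epsilon$; since $\epsilon\in(0,\rho)$ was arbitrary, $x\notin USC(\omega_f)$, a contradiction.

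I expect the main obstacle to be the claim in the second paragraph. The naive approach of following the orbit of $y$ and shadowing it fails, because $\omega_f(y)$ may lie inside $\omega_f(x)$ (or arbitrarily close to it), so the shadowing orbit would be drawn back into $U$; one genuinely needs to use that $y$ is chain reachable from $\omega_f(x)$ (that is, $y\in\omega_f^\ast(x)$) to locate a chain loop sitting at a definite distance from $\omega_f(x)$, since only a pseudo orbit having such a loop as a tail is ``seen'' by the shadowing. The remaining bookkeeping — checking $\xi$ is a genuine $\delta$-pseudo orbit and extracting the limit point $p$ — is routine.
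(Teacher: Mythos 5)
Your proof is correct. The final contradiction mechanism is the same as the paper's: build a $\delta$-pseudo orbit starting at $x$ that returns infinitely often to a point at definite distance from $\omega_f(x)$, shadow it by some $z$ with $d(x,z)\le\epsilon$, and extract a point of $\omega_f(z)$ outside a fixed neighborhood $U\supset\omega_f(x)$, contradicting $x\in USC(\omega_f)$. Where you differ is in how the recurrent loop is produced. The paper fixes $y\in\Omega_f(x)\setminus\omega_f(x)$ and $z\in\omega_f(x)$ and uses upper semicontinuity a \emph{second} time to show $y\rightarrow w$ for some $w\in\omega_f(x)$, closing the cycle $x\rightarrow z\rightarrow y\rightarrow w\rightarrow z$ and then iterating the $z\rightarrow y\rightarrow z$ loop. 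You instead locate a chain recurrent point $v\in\omega_f^\ast(x)\setminus\omega_f(x)$ by a case analysis on whether $\omega_f(y)\subset\omega_f(x)$, which uses only the listed structural facts about $\omega_f^\ast(x)$ and no semicontinuity; USC enters your argument only once, at the very end. This buys you the formally stronger inclusion $Sh(f)\cap USC(\omega_f)\subset\{x\colon\omega_f(x)=\omega_f^\ast(x)\}$ without invoking the identification $\Omega_f(x)=\omega_f^\ast(x)$ for shadowable points (Remark 1.2), while the paper's route stays closer to the definition of $\Omega_f(x)$ and yields the stated form directly. Both arguments are sound; yours is a legitimate and slightly more self-contained variant.
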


\begin{proof}
Let $x\in Sh(f)\cap USC(\omega_f)$ and assume $\omega_f(x)\ne\Omega_f(x)$. We fix $y\in\Omega_f(x)\setminus\omega_f(x)$ and $z\in\omega_f(x)$. Since $z\in\omega_f(x)$ and $y\in\Omega_f(x)$, we have $x\rightarrow z$ and $z\rightarrow y$. By $x\in USC(\omega_f)$ and $y\in\Omega_f(x)$, we obtain $y\rightarrow w$ for some $w\in\omega_f(x)$. Since $z,w\in\omega_f(x)$, we have $w\rightarrow z$. From $y\rightarrow w$ and $w\rightarrow z$, it follows that $y\rightarrow z$. For any $\epsilon>0$, by $x\in Sh(f)$, there is $\delta>0$ such that every $\delta$-pseudo orbit $(w_i)_{i\ge0}$ of $f$ with $w_0=x$ is $\epsilon$-shadowed by some $p\in X$. We take $\delta$-chains $\alpha=(x_i)_{i=0}^k$, $\beta=(y_i)_{i=0}^l$, $\gamma=(z_i)_{i=0}^m$ of $f$ such that $x_0=x$, $x_k=y_0=z_m=z$, and $y_l=z_0=y$. Consider the following $\delta$-pseudo orbit of $f$
\[
\xi=(w_i)_{i\ge0}=\alpha\beta\gamma\beta\gamma\beta\gamma\cdots
\]
with $w_0=x$, which is $\epsilon$-shadowed by some $p_\epsilon\in X$. Note that $d(x,p_\epsilon)\le\epsilon$. Since
\[
d(y,f^{k+l+n(l+m)}(p_\epsilon))\le\epsilon 
\]
for all $n\ge0$, we obtain $d(y,q_\epsilon)\le\epsilon$ for some $q_\epsilon\in\omega(p_\epsilon)$. Note that $y\not\in\omega_f(x)$. As $\epsilon>0$ is arbitrary, we obtain $x\not\in USC(\omega_f(x))$, a contradiction. This completes the proof of the lemma. 
\end{proof}

The following is a corollary of Theorem 2.1.

\begin{cor}
Given a continuous map $f\colon X\to X$ and $x\in Sh(f)$, the following conditions are equivalent
\begin{itemize}
\item $x\in USC(\omega_f)$,
\item $\omega_f(x)$ is chain stable.
\end{itemize}
\end{cor}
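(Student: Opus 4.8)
The plan is to deduce this corollary from Theorem 2.1 by showing that, for a shadowable point $x$, the condition $\omega_f(x)=\Omega_f(x)$ is equivalent to $\omega_f(x)$ being chain stable. Since Theorem 2.1 already gives the equivalence of $x\in USC(\omega_f)$ with $\omega_f(x)=\Omega_f(x)$, it suffices to establish this second equivalence under the hypothesis $x\in Sh(f)$.

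First I would prove the easy direction: if $\omega_f(x)=\Omega_f(x)$, then $\omega_f(x)$ is chain stable. Here I would recall from the preliminaries that $\Omega_f(x)=\omega_f^\ast(x)$ whenever $x\in Sh(f)$ (stated in the Remark following the definition of shadowable points), and that $\omega_f^\ast(x)$ is always chain stable. Hence $\omega_f(x)=\Omega_f(x)=\omega_f^\ast(x)$ is chain stable. This direction is essentially immediate given the facts collected in the preliminaries.

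For the converse, suppose $\omega_f(x)$ is chain stable; I want to conclude $\omega_f(x)=\Omega_f(x)$. The inclusion $\omega_f(x)\subset\Omega_f(x)$ is automatic, so I must show $\Omega_f(x)\subset\omega_f(x)$. Take $y\in\Omega_f(x)$. Since $x\in Sh(f)$, we have $\Omega_f(x)=\omega_f^\ast(x)$, and by the description $\omega_f^\ast(x)=\{z\in X:w\rightarrow z\text{ for some }w\in\omega_f(x)\}$ there is $w\in\omega_f(x)$ with $w\rightarrow y$. Now $w\in\omega_f(x)$ and $\omega_f(x)$ is chain stable, so $w\rightarrow y$ forces $y\in\omega_f(x)$. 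This gives $\Omega_f(x)\subset\omega_f(x)$, hence equality, and Theorem 2.1 then yields $x\in USC(\omega_f)$.

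The main obstacle, if any, is simply making sure the chain-stability characterization is invoked correctly: chain stability of $\omega_f(x)$ says precisely that $w\rightarrow z$ with $w\in\omega_f(x)$ implies $z\in\omega_f(x)$ (note $\omega_f(x)$ is closed with $f(\omega_f(x))=\omega_f(x)$, so it is a legitimate candidate set in the definition of chain stable). Combined with the identity $\omega_f^\ast(x)=\{z:w\rightarrow z\text{ for some }w\in\omega_f(x)\}$ and the equality $\Omega_f(x)=\omega_f^\ast(x)$ valid for shadowable points, the argument closes with no further work. I would therefore present the corollary's proof as two short paragraphs, one for each inclusion in $\omega_f(x)=\Omega_f(x)$, followed by an appeal to Theorem 2.1.
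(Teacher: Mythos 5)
Your proposal is correct and follows exactly the route the paper intends: the paper states this corollary without proof as an immediate consequence of Theorem 2.1, and the bridge you supply (for $x\in Sh(f)$, $\omega_f(x)=\Omega_f(x)=\omega_f^\ast(x)$ is chain stable in one direction, and chain stability of $\omega_f(x)$ together with $\omega_f^\ast(x)=\{z\colon w\rightarrow z\text{ for some }w\in\omega_f(x)\}$ forces $\Omega_f(x)\subset\omega_f(x)$ in the other) is precisely the argument the preliminaries set up. No gaps.
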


\section{Lower semicontinuity}

We shall recall the definition of minimal sets. Given a continuous map $f\colon X\to X$, a non-empty closed subset $M$ of $X$ with $f(M)\subset M$ is called a {\em minimal set} for $f$ if $M=\omega_f(x)$ for all $x\in M$, or equivalently, for any closed subset $S$ of $X$ with $S\subset M$, $f(S)\subset S$ implies $S\in\{\emptyset,M\}$. By Zorn's lemma, for every non-empty closed subset $S$ of $X$ with $f(S)\subset S$, we have a minimal set $M$ for $f$ such that $M\subset S$.

The aim of this section is to prove the following theorem. 

\begin{thm}
Given a continuous map $f\colon X\to X$ and $x\in Sh(f)$, the following conditions are equivalent
\begin{itemize}
\item $x\in LSC(\omega_f)$,
\item $\omega_f(x)\subset\omega_f(y)$ for all $y\in\Omega_f(x)$ (and so $\omega_f(x)$ is a minimal set for $f$).
\end{itemize}
\end{thm}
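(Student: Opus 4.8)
The plan is to establish the two implications separately; the parenthetical assertion that $\omega_f(x)$ is minimal then follows from the second condition by taking $y\in\omega_f(x)\subset\Omega_f(x)$, since $y\in\omega_f(x)$ forces $\omega_f(y)\subset\omega_f(x)$ and hence $\omega_f(y)=\omega_f(x)$ for every such $y$. I expect the implication $x\in LSC(\omega_f)\Rightarrow(\omega_f(x)\subset\omega_f(y)$ for all $y\in\Omega_f(x))$ to be the substantial one and the only place where shadowability of $x$ is used; the converse is a compactness argument.

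For the converse, suppose $x\notin LSC(\omega_f)$, so there are an open set $U$ with $\omega_f(x)\cap U\ne\emptyset$ and points $x_j\to x$ with $\omega_f(x_j)\cap U=\emptyset$ for all $j\ge1$. Since $(\mathcal{K}(X),d_H)$ is compact, after passing to a subsequence we may assume $\omega_f(x_j)\to K$ in $d_H$ for some $K\in\mathcal{K}(X)$. I would then verify: $K\cap U=\emptyset$, because each $\omega_f(x_j)$ lies in the closed set $X\setminus U$; $f(K)=K$, because $A\mapsto f(A)$ is a continuous self-map of $\mathcal{K}(X)$ that fixes each $\omega_f(x_j)$; and $K\subset\Omega_f(x)$, because a point of $K$ is a limit of points $k_j\in\omega_f(x_j)$, and choosing $i_j$ increasing with $f^{i_j}(x_j)$ arbitrarily close to $k_j$ exhibits such a limit as an element of $\Omega_f(x)$. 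Picking any $y\in K$, invariance and closedness of $K$ give $\omega_f(y)\subset K$, so $\omega_f(y)\cap U=\emptyset$; but $y\in\Omega_f(x)$, so the hypothesis yields $\omega_f(x)\subset\omega_f(y)$ and hence $\omega_f(x)\cap U=\emptyset$, contradicting the choice of $U$.

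For the forward implication, assume $x\in LSC(\omega_f)$ and fix $y\in\Omega_f(x)$ and $z\in\omega_f(x)$; the goal is $z\in\omega_f(y)$. Suppose not, and put $\rho=d(z,\omega_f(y))>0$ and $U=\{w\in X:d(z,w)<\rho/2\}$, an open set containing $z\in\omega_f(x)$. Lower semicontinuity at $x$ provides $\delta'>0$ with $d(x,p)\le\delta'\Rightarrow\omega_f(p)\cap U\ne\emptyset$. Set $\epsilon=\min\{\delta',\rho/3\}$ and let $\delta>0$ be given by $x\in Sh(f)$ for this $\epsilon$. Note first that $x\rightarrow y$: since $y\in\Omega_f(x)\subset\omega_f^\ast(x)$ there is $v\in\omega_f(x)$ with $v\rightarrow y$, and $x\rightarrow v$, so $x\rightarrow y$ by transitivity of $\rightarrow$. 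Fix a $\delta$-chain $(a_0,\dots,a_N)$ of $f$ with $a_0=x$, $a_N=y$, and consider the $\delta$-pseudo orbit $\xi=(a_0,\dots,a_{N-1},y,f(y),f^2(y),\dots)$ with $\xi_0=x$; let $p\in X$ $\epsilon$-shadow $\xi$. Then $d(x,p)\le\epsilon\le\delta'$, while $d(f^{N+t}(p),f^t(y))\le\epsilon$ for all $t\ge0$, and passing to limits along a subsequence of times shows $d(q,\omega_f(y))\le\epsilon$ for every $q\in\omega_f(p)$. Hence $d(q,z)\ge\rho-\epsilon>\rho/2$ for all $q\in\omega_f(p)$, so $\omega_f(p)\cap U=\emptyset$, contradicting the defining property of $\delta'$. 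Therefore $z\in\omega_f(y)$, and letting $z$ and $y$ vary completes this direction.

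The main obstacle will be the quantifier bookkeeping in the forward direction: one must read off $\delta'$ from lower semicontinuity before fixing the shadowing scale, and $\epsilon$ must be chosen small enough both to satisfy $\epsilon\le\delta'$ (so that $p$ is admissible for the lower semicontinuity bound) and to keep the $\epsilon$-neighborhood of $\omega_f(y)$ clear of $U$. Besides this, the two routine points to check are that $\xi$ really is a $\delta$-pseudo orbit based at $x$ and that $\epsilon$-shadowing of its orbit tail pins $\omega_f(p)$ into the $\epsilon$-neighborhood of $\omega_f(y)$; both follow directly from the definitions.
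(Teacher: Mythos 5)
Your proposal is correct and follows essentially the same route as the paper: the converse is the paper's Lemma 3.1 (pass to a Hausdorff limit $K$ of the $\omega_f(x_j)$, note it is invariant, disjoint from $U$, and contained in $\Omega_f(x)$, then apply the hypothesis to a point of $K$), and the forward direction is the paper's Lemma 3.2 (concatenate a $\delta$-chain from $x$ to $y$ with the true orbit of $y$, shadow it by $p$, and observe $\omega_f(p)\subset B_\epsilon(\omega_f(y))$ misses a neighborhood of $z\in\omega_f(x)$, contradicting lower semicontinuity). The only differences are cosmetic quantifier arrangements.
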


Let us first prove the following lemma.

\begin{lem}
For any continuous map $f\colon X\to X$,
\[
\{x\in X\colon\text{$\omega_f(x)\subset\omega_f(y)$ for all $y\in\Omega_f(x)$}\}\subset LSC(\omega_f).
\]
\end{lem}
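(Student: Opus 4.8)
The plan is to argue by contradiction, the only nontrivial input being the standard fact listed above that $\lim_{i\to\infty}d(f^i(y),\omega_f(y))=0$ for every $y\in X$.

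Suppose $x$ lies in the set on the left but $x\notin LSC(\omega_f)$. Unwinding the definition of lower semicontinuity, there are an open set $U\subset X$ with $\omega_f(x)\cap U\neq\emptyset$ and points $y_j\in X$, $j\ge1$, with $y_j\to x$ and $\omega_f(y_j)\cap U=\emptyset$ for all $j$. In particular $U\neq X$, so $X\setminus U$ is a non-empty closed set and $\omega_f(y_j)\subset X\setminus U$ for every $j$. Fix once and for all a point $p\in\omega_f(x)\cap U$. The goal is to produce $w\in\Omega_f(x)$ with $\omega_f(w)\cap U=\emptyset$; this will contradict the hypothesis, since $\omega_f(x)\subset\omega_f(w)$ would then force $p\in\omega_f(w)\cap U$.

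To build $w$, use the cited fact to choose, for each $j$, an integer $N_j$ with $d(f^i(y_j),\omega_f(y_j))<1/j$ for all $i\ge N_j$; because $\omega_f(y_j)\subset X\setminus U$, this gives $d(f^i(y_j),X\setminus U)<1/j$ for all $i\ge N_j$. Pick integers $0\le i_1<i_2<\cdots$ with $i_j\ge N_j$ for every $j$ (for instance $i_1=N_1$ and $i_j=\max\{N_j,\,i_{j-1}+1\}$), and set $w_j:=f^{i_j}(y_j)$. Then for every $i\ge0$ we have $f^i(w_j)=f^{i_j+i}(y_j)$ with $i_j+i\ge N_j$, hence $d(f^i(w_j),X\setminus U)<1/j$. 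By compactness of $X$, pass to a subsequence with $w_{j_k}\to w$. For each fixed $i\ge0$, continuity of $f^i$ gives $f^i(w_{j_k})\to f^i(w)$, and since $d(f^i(w_{j_k}),X\setminus U)<1/j_k\to0$ with $X\setminus U$ closed, we get $f^i(w)\in X\setminus U$. Thus the whole forward orbit of $w$ lies in the closed set $X\setminus U$, whence $\omega_f(w)\subset X\setminus U$, i.e. $\omega_f(w)\cap U=\emptyset$. Finally $w\in\Omega_f(x)$, since $y_{j_k}\to x$, the integers $i_{j_k}$ are strictly increasing and nonnegative, and $f^{i_{j_k}}(y_{j_k})=w_{j_k}\to w$. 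This yields the desired contradiction, so $x\in LSC(\omega_f)$.

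The delicate point — and the reason the hypothesis is phrased for all $y\in\Omega_f(x)$ — is the choice of base point: working with $w_j=f^{i_j}(y_j)$ for $i_j\ge N_j$ rather than with $y_j$ itself (or with a single iterate) is exactly what upgrades the asymptotic estimate $d(f^i(y_j),\omega_f(y_j))\to0$ to a uniform estimate along the entire forward orbit of $w_j$, so that in the limit it is $\omega_f(w)$, not merely the point $w$, that avoids $U$. Once this shift is in place, the rest is a routine compactness-and-continuity argument.
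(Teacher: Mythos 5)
Your proof is correct. It follows the same overall strategy as the paper --- argue by contradiction and produce a point $w\in\Omega_f(x)$ with $\omega_f(w)\cap U=\emptyset$, which the hypothesis forbids --- but the construction of the witness $w$ is genuinely different. The paper passes to a subsequence along which the sets $\omega_f(x_j)$ converge in the Hausdorff metric (using compactness of $(\mathcal{K}(X),d_H)$) to a closed, forward-invariant set $A\subset\Omega_f(x)$ disjoint from $U$, and then picks any $y\in A$, noting $\omega_f(y)\subset A$. You instead shift each $x_j$ far enough along its orbit, using $\lim_{i\to\infty}d(f^i(x_j),\omega_f(x_j))=0$, so that the entire forward orbit of $w_j=f^{i_j}(x_j)$ lies within $1/j$ of the closed set $X\setminus U$, and then take a limit point $w$ of the $w_j$; the uniform-along-the-orbit estimate survives the limit and pins $\omega_f(w)$ inside $X\setminus U$. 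Your version trades the hyperspace compactness for a slightly more hands-on diagonal argument at the level of points; the paper's version is shorter once one accepts compactness of the hyperspace and the forward invariance of Hausdorff limits of $\omega$-limit sets, which your argument does not need to verify. Both are elementary and complete; your identification of the orbit shift $i_j\ge N_j$ as the step that upgrades the asymptotic estimate to one valid along the whole forward orbit of $w_j$ is exactly the right point to emphasize.
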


\begin{proof}
Let $x\in X$ and let $\omega_f(x)\subset\omega_f(y)$ for all $y\in\Omega_f(x)$. If $x\not\in LSC(\omega_f)$, then there are an open subset $U$ of $X$, $x_j\in X$, $j\ge1$, $x\in X$, and a closed subset $A$ of $X$ such that
\begin{itemize}
\item $\omega_f(x)\cap U\ne\emptyset$,
\item $\lim_{j\to\infty}x_j=x$,
\item $\omega_f(x_j)\cap U=\emptyset$ for all $j\ge1$,
\item $\lim_{j\to\infty}\omega_f(x_j)=A$ (with respect to the Hausdorff distance).
\end{itemize}
It follows that
\begin{itemize}
\item $A\cap U=\emptyset$ and so $\omega_f(x)\not\subset A$,
\item $A\subset\Omega_f(x)$,
\item $f(A)\subset A$.
\end{itemize}
By taking $y\in A$, as $\omega_f(y)\subset A$, we obtain $y\in \Omega_f(x)$ and $\omega_f(x)\not\subset\omega_f(y)$, a contradiction. Thus, we obtain $x\in LSC(\omega_f)$, completing the proof.
\end{proof}

Theorem 3.1 is an immediate consequence of Lemma 3.1 and the next lemma.

\begin{lem}
For any continuous map $f\colon X\to X$,
\[
Sh(f)\cap LSC(\omega_f)\subset\{x\in X\colon\text{$\omega_f(x)\subset\omega_f(y)$ for all $y\in\Omega_f(x)$}\}.
\]
\end{lem}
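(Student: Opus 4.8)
The plan is to argue by contradiction with the lower semicontinuity of $\omega_f$ at $x$. So I would fix $x\in Sh(f)\cap LSC(\omega_f)$, assume there is $y\in\Omega_f(x)$ with $\omega_f(x)\not\subset\omega_f(y)$, and choose a witness $z\in\omega_f(x)\setminus\omega_f(y)$. Since $\omega_f(y)$ is closed, $r_0:=d(z,\omega_f(y))>0$; set $U:=\{u\in X:d(u,z)<r_0/2\}$. Then $U$ is open, $\omega_f(x)\cap U\ne\emptyset$ (it contains $z$), and by the triangle inequality $U$ is disjoint from the closed $\epsilon$-neighborhood of $\omega_f(y)$ for every $\epsilon<r_0/2$. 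From $x\in LSC(\omega_f)$ I then obtain $\delta'>0$ such that $d(x,x')\le\delta'$ implies $\omega_f(x')\cap U\ne\emptyset$ for all $x'\in X$.

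The heart of the proof is to build, for a suitably small $\epsilon$, a point $p$ with $d(x,p)\le\epsilon$ whose \emph{entire} $\omega$-limit set is $\epsilon$-close to $\omega_f(y)$; choosing $\epsilon\le\delta'$ and $\epsilon<r_0/2$ then forces $\omega_f(p)\cap U\ne\emptyset$ (via the LSC constant $\delta'$) while simultaneously $\omega_f(p)$ lies in the closed $\epsilon$-neighborhood of $\omega_f(y)$, hence $\omega_f(p)\cap U=\emptyset$ — the desired contradiction. To build $p$: fix such an $\epsilon$, let $\delta>0$ be the shadowing constant for $\epsilon$ coming from $x\in Sh(f)$; since $y\in\Omega_f(x)\subset\omega_f^\ast(x)$, the definition of $\omega_f^\ast(x)$ yields a $\delta$-chain $(x_i)_{i=0}^k$ of $f$ with $x_0=x$ and $x_k=y$; and then I splice this with the genuine forward orbit of $y$, i.e. consider
\[
\xi=(w_i)_{i\ge0},\qquad w_i=x_i\ (0\le i\le k),\quad w_{k+j}=f^j(y)\ (j\ge1).
\]
This $\xi$ is a $\delta$-pseudo orbit of $f$ with $w_0=x$ — the only junction to check is at index $k$, where $d(f(w_k),w_{k+1})=d(f(y),f(y))=0$ — so it is $\epsilon$-shadowed by some $p=p_\epsilon\in X$.

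It remains to extract the two promised properties of $p$. From $d(x,p)=d(w_0,p)\le\epsilon\le\delta'$ and the choice of $\delta'$ we get $\omega_f(p)\cap U\ne\emptyset$. On the other hand, for every $j\ge1$ we have $d(f^{k+j}(p),f^j(y))=d(f^{k+j}(p),w_{k+j})\le\epsilon$; so, given $q\in\omega_f(p)$, write $q=\lim_{j\to\infty}f^{n_j}(p)$ with $n_1<n_2<\cdots$, note that for large $j$ we have $m_j:=n_j-k\ge1$ and $m_j\to\infty$ with $d(f^{n_j}(p),f^{m_j}(y))\le\epsilon$, pass to a subsequence along which $f^{m_j}(y)\to r\in\omega_f(y)$, and conclude $d(q,r)\le\epsilon$, i.e. $d(q,\omega_f(y))\le\epsilon$. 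Hence $\omega_f(p)$ is contained in the closed $\epsilon$-neighborhood of $\omega_f(y)$, which is disjoint from $U$; this contradicts $\omega_f(p)\cap U\ne\emptyset$, and the lemma follows.

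I expect the only genuine obstacle to be identifying the right pseudo orbit: one must drive a pseudo orbit from $x$ into $y$ (possible because $\Omega_f(x)\subset\omega_f^\ast(x)$, so shadowability is \emph{not} used for this step) and then continue along the true orbit of $y$, so that the shadowing point sits near $x$ at time $0$ yet has its asymptotic behavior pinned to $\omega_f(y)$. Everything after that is routine triangle-inequality and compactness bookkeeping, and shadowability of $x$ is used only once, for the shadowing property itself.
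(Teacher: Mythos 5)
Your proposal is correct and follows essentially the same route as the paper: drive a $\delta$-chain from $x$ to $y$ (using $y\in\Omega_f(x)$), continue along the true orbit of $y$, and let shadowing produce a point $p_\epsilon$ near $x$ with $\omega_f(p_\epsilon)\subset B_\epsilon(\omega_f(y))$, contradicting lower semicontinuity at the witness $z\in\omega_f(x)\setminus\omega_f(y)$. The only difference is presentational — you fix the LSC constant $\delta'$ up front and then choose $\epsilon\le\delta'$, whereas the paper lets $\epsilon$ range and concludes $x\notin LSC(\omega_f)$ at the end — and your extra care at the junction index and in the subsequence extraction is just a fuller write-up of the same steps.
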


For $x\in X$ and $r>0$, we denote by $B_r(x)$ the closed $r$-ball centered at $x$:
\[
B_r(x)=\{y\in X\colon d(x,y)\le r\}.
\]
Similarly, for a subset $S$ of $X$ and $r>0$, let $B_r(S)$ denote the closed $r$-neighborhood of $S$:
\[
B_r(S)=\{y\in X\colon d(y,S)\le r\}.
\]

\begin{proof}
Let $x\in Sh(f)\cap LSC(\omega_f)$ and assume that $\omega_f(x)\not\subset\omega_f(y)$ for some $y\in\Omega_f(x)$. We take $z\in\omega_f(x)\setminus\omega_f(y)$ and $0<r<d(z,\omega_f(y))/2$. Note that $B_r(z)\cap B_r(\omega_f(y))=\emptyset$. For any $0<\epsilon\le r$, by $x\in Sh(f)$, there is $\delta>0$ such that every $\delta$-pseudo orbit $(x_i)_{i\ge0}$ of $f$ with $x_0=x$ is $\epsilon$-shadowed by some $p\in X$. Since $y\in\Omega_f(x)$, we have $x\rightarrow y$ and so there is a $\delta$-chain $(y_i)_{i=0}^k$ of $f$ with $y_0=x$ and $y_k=y$. Consider the following $\delta$-pseudo orbit of $f$
\[
\xi=(x_i)_{i\ge0}=(y_0,y_1,\dots,y_{k-1},y,f(y),f^2(y),\dots)
\]
with $x_0=x$, which is $\epsilon$-shadowed by some $p_\epsilon\in X$. Note that $d(x,p_\epsilon)\le\epsilon$. By
\[
\omega_f(p_\epsilon)\subset B_\epsilon(\omega_f(y))\subset B_r(\omega_f(y))
\]
we have $B_r(z)\cap\omega_f(p_\epsilon)=\emptyset$. Since $0<\epsilon\le r$ is arbitrary, we obtain $x\not\in LSC(\omega_f(x))$, a contradiction. This completes the proof of the lemma.
\end{proof}

The following is a corollary of Lemma 3.2.

\begin{cor}
Given a continuous map $f\colon X\to X$ and $x\in Sh(f)$, if $x\in LSC(\omega_f)$, then $y\rightarrow z$ for all $y,z\in\omega_f^\ast(x)$.
\end{cor}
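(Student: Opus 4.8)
The plan is to deduce this directly from Lemma 3.2 (equivalently, Theorem 3.1) together with the elementary facts about $\omega_f^\ast$ and the chain relation $\to$ collected just before Remark 1.2. Since $x\in Sh(f)\cap LSC(\omega_f)$, Lemma 3.2 applies and gives $\omega_f(x)\subset\omega_f(w)$ for every $w\in\Omega_f(x)$; moreover, because $x\in Sh(f)$ we have $\Omega_f(x)=\omega_f^\ast(x)$, so in fact $\omega_f(x)\subset\omega_f(w)$ for every $w\in\omega_f^\ast(x)$. This containment is the one substantive input; the rest is a concatenation argument.

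Next I would fix $y,z\in\omega_f^\ast(x)$ and aim to show $y\to z$. For the target point $z$: by the identity $\omega_f^\ast(x)=\{w\in X: v\to w\ \text{for some}\ v\in\omega_f(x)\}$ recorded in the introduction, there is $v\in\omega_f(x)$ with $v\to z$. For the source point $y$: since $y\in\omega_f^\ast(x)$ and $\omega_f(x)\subset\omega_f(y)$ by the previous paragraph, we get $v\in\omega_f(y)$; and since $p\to w$ for all $p\in X$ and $w\in\omega_f(p)$ (again a listed fact), this yields $y\to v$. Finally, concatenating a $\delta$-chain from $y$ to $v$ with one from $v$ to $z$ shows that $y\to v$ and $v\to z$ imply $y\to z$ (transitivity of $\to$, as already used in the proof of Lemma 2.2). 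As $y,z$ were arbitrary in $\omega_f^\ast(x)$, this proves the corollary.

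There is essentially no hard step here; the only points requiring care are (i) converting the hypothesis $y\in\omega_f^\ast(x)$ into $y\in\Omega_f(x)$ via $x\in Sh(f)$ before invoking Lemma 3.2, so that $\omega_f(x)\subset\omega_f(y)$ is available, and (ii) keeping straight which point $v\in\omega_f(x)$ serves as the ``hub'' through which the chain from $y$ to $z$ is routed. One could alternatively route through an arbitrary $p\in\omega_f(x)$, using the minimality of $\omega_f(x)$ (also furnished by Theorem 3.1) to obtain $p\to v$, but the direct route $y\to v\to z$ is shorter and needs no appeal to minimality.
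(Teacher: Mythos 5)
Your proof is correct, and it follows the same overall skeleton as the paper's: both arguments route the chain from $y$ to $z$ through a hub point of $\omega_f(x)$, and both rest on the same substantive input, namely $\Omega_f(x)=\omega_f^\ast(x)$ (from $x\in Sh(f)$) combined with Lemma 3.2 to get $\omega_f(x)\subset\omega_f(w)$ for all $w\in\omega_f^\ast(x)$, which yields the leg $y\rightarrow v$ for $v\in\omega_f(x)\subset\omega_f(y)$. Where you diverge is the leg from $\omega_f(x)$ to $z$: you invoke directly the fact recorded in the introduction that $v\rightarrow z$ for all $v\in\omega_f(x)$ and $z\in\omega_f^\ast(x)$, whereas the paper re-derives this connectivity by taking a backward orbit of $z$ inside $\omega_f^\ast(x)$ (using $f(\omega_f^\ast(x))=\omega_f^\ast(x)$), forming its $\alpha$-limit set, and chaining $y\rightarrow p\rightarrow q\rightarrow z$ with $p\in\omega_f(x)\subset\omega_f(y)\cap\alpha$ and $q\in\alpha$. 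Your shortcut is legitimate since the paper explicitly lists that fact among the properties of $\omega_f^\ast$, and it makes the proof noticeably shorter; the paper's version buys a more self-contained argument that does not lean on that particular unproved item, at the cost of the extra backward-orbit construction. No gap in your argument: the transitivity of $\rightarrow$ by concatenation of $\delta$-chains is exactly as used in the proof of Lemma 2.2, and you correctly convert $y\in\omega_f^\ast(x)$ into $y\in\Omega_f(x)$ before applying Lemma 3.2.
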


\begin{proof}
Let $x\in Sh(f)\cap LSC(\omega_f)$ and note that $\Omega_f(x)=\omega_f^\ast(x)$. By Lemma 3.2, we have $\omega_f(x)\subset\omega_f(w)$ for all $w\in\omega_f^\ast(x)$. Let $y,z\in\omega_f^\ast(x)$. Since $f(\omega_f^\ast(x))=\omega_f^\ast(x)$, there is a sequence $x_i\in\omega_f^\ast(x)$, $i\le0$, such that
\begin{itemize}
\item $x_0=z$,
\item $f(x_{i-1})=x_i$ for all $i\le0$. 
\end{itemize}
Let $\alpha$ be the set of $p\in X$ such that $\lim_{j\to\infty}x_{i_j}=p$ for some $0\ge i_1>i_2>\cdots$. Note that
\begin{itemize}
\item $\alpha$ is a closed subset of $\omega_f^\ast(x)$ and satisfies $f(\alpha)=\alpha$,
\item $\lim_{i\to-\infty}d(x_i,\alpha)=0$,
\item $p\rightarrow q$ for all $p,q\in\alpha$.
\end{itemize}
By taking $q\in\alpha$, we have $q\rightarrow z$. Since
\[
\omega_f(x)\subset\omega_f(y)\cap\omega_f(q)\subset\omega_f(y)\cap\alpha,
\]
by taking $p\in\omega_f(x)$, we obtain $y\rightarrow p$ and $p\rightarrow q$. It follows that $y\rightarrow z$. Since $y,z\in\omega_f^\ast(x)$ are arbitrary, the corollary has been proved.
\end{proof}

\section{Corollaries}

The main aim of this section is to present several corollaries of the results in the previous sections. We shall recall the notion of chain recurrence and chain components. Given a continuous map $f\colon X\to X$, the {\em chain recurrent set} $CR(f)$ for $f$ is defined as
\[
CR(f)=\{x\in X\colon x\to x\}.
\]
Note that $CR(f)$ is a closed subset of $X$ and satisfies $f(CR(f))=CR(f)$. An equivalence class of the (equivalence) relation $\leftrightarrow$ in
\[
CR(f)^2=CR(f)\times CR(f)
\]
defined by: for all $x,y\in CR(f)$, $x\leftrightarrow y$ if and only if $x\rightarrow y$ and $y\rightarrow x$, is called a {\em chain component} for $f$. Let $\mathcal{C}(f)$ denote the set of chain components for $f$. Every $C\in\mathcal{C}(f)$ is a closed subset of $CR(f)$ and satisfies $f(C)=C$. For any $x\in X$, since $y\rightarrow z$ for all $y,z\in\omega_f(x)$, there is a unique $C_f(x)\in\mathcal{C}(f)$ such that $\omega_f(x)\subset C_f(x)$. Note that
\[
\omega_f(x)\subset C_f(x)\subset\omega_f^\ast(x).
\]
Following \cite{AHK}, we say that a chain component $C\in\mathcal{C}(f)$ is {\em terminal} if $C$ is chain stable.  We denote by $\mathcal{C}_{\rm ter}(f)$ the set of terminal chain components for $f$.

\begin{rem}
\normalfont
Let $f\colon X\to X$ be a continuous map. For any connected subset $S$ of $X$, if $S\subset CR(f)$, then we have $x\rightarrow y$ for all $x,y\in S$.
\end{rem}

The following is a consequence of Corollaries 2.1 and 3.1. 

\begin{cor}
Given a continuous map $f\colon X\to X$ and $x\in Sh(f)$, if $x\in USC(\omega_f)\cup LSC(\omega_f)$, then $C_f(x)\in\mathcal{C}_{\rm ter}(f)$.
\end{cor}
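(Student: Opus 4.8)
The plan is to reduce both cases to the single identity $C_f(x)=\omega_f^\ast(x)$. Once this is established the conclusion is immediate: $\omega_f^\ast(x)$ is chain stable (one of the facts recorded before Remark 1.2) and $C_f(x)\in\mathcal{C}(f)$, so $C_f(x)$ is a chain stable chain component, i.e., $C_f(x)\in\mathcal{C}_{\rm ter}(f)$. Throughout I would use the inclusions $\omega_f(x)\subset C_f(x)\subset\omega_f^\ast(x)$ and the description $\omega_f^\ast(x)=\{z\in X:\ y\rightarrow z\ \text{for some}\ y\in\omega_f(x)\}$.

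Suppose first $x\in USC(\omega_f)$. Corollary 2.1 gives that $\omega_f(x)$ is chain stable. Then for any $z\in\omega_f^\ast(x)$ there is $y\in\omega_f(x)$ with $y\rightarrow z$, and chain stability of $\omega_f(x)$ forces $z\in\omega_f(x)$; hence $\omega_f^\ast(x)\subset\omega_f(x)$. Combined with $\omega_f(x)\subset C_f(x)\subset\omega_f^\ast(x)$, the chain of inclusions closes up and $C_f(x)=\omega_f^\ast(x)$.

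Suppose next $x\in LSC(\omega_f)$. Corollary 3.1 gives $y\rightarrow z$ for all $y,z\in\omega_f^\ast(x)$. Taking $y=z$ shows $\omega_f^\ast(x)\subset CR(f)$, and then every pair of points of $\omega_f^\ast(x)$ is $\leftrightarrow$-related, so $\omega_f^\ast(x)$ lies in a single chain component $C$. Since $\omega_f(x)\subset\omega_f^\ast(x)\subset C$ and $C_f(x)$ is the unique chain component containing $\omega_f(x)$, we get $C=C_f(x)$, hence $\omega_f^\ast(x)\subset C_f(x)$, and again $C_f(x)=\omega_f^\ast(x)$.

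I do not expect a genuine obstacle: the two substantive inputs, Corollaries 2.1 and 3.1, are already available, and what remains is bookkeeping with the chain relation. The only place requiring a little care is the lower semicontinuity case, where one must note that pairwise chain reachability within a subset of $CR(f)$ forces that subset into one chain component — immediate from the definition of $\mathcal{C}(f)$ — and then invoke the uniqueness of $C_f(x)$ to identify that component.
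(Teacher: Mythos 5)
Your proof is correct and follows exactly the route the paper intends: the paper states Corollary 4.1 as a direct consequence of Corollaries 2.1 and 3.1, and your argument supplies the natural bookkeeping, showing $C_f(x)=\omega_f^\ast(x)$ in both cases and invoking the chain stability of $\omega_f^\ast(x)$. No gaps.
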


By Theorems 2.1 and 3.1, we obtain the following corollary.

\begin{cor}
Given a continuous map $f\colon X\to X$ and $x\in Sh(f)$, the following conditions are equivalent
\begin{itemize}
\item[(1)] $x\in C(\omega_f)$,
\item[(2)] $\omega_f(x)$ is chain stable and a minimal set for $f$,
\item[(3)] $C_f(x)\in\mathcal{C}_{\rm ter}(f)$ and $C_f(x)$ is a minimal set for $f$.
\end{itemize}
\end{cor}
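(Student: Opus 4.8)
The plan is to prove the two equivalences $(1)\Leftrightarrow(2)$ and $(2)\Leftrightarrow(3)$ separately. The first rests entirely on the theorems of Sections~2 and~3 together with Corollary~2.1 and the identity $C(\omega_f)=USC(\omega_f)\cap LSC(\omega_f)$; the second uses the inclusions $\omega_f(x)\subset C_f(x)\subset\omega_f^\ast(x)$ and the description $\omega_f^\ast(x)=\{z\in X:y\rightarrow z\text{ for some }y\in\omega_f(x)\}$, with the fact that every $C\in\mathcal{C}(f)$ satisfies $f(C)=C$ and ``terminal'' means ``chain stable''.

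For $(1)\Rightarrow(2)$: since $x\in C(\omega_f)=USC(\omega_f)\cap LSC(\omega_f)$, Corollary~2.1 applied to $x\in USC(\omega_f)$ shows $\omega_f(x)$ is chain stable, and Theorem~3.1 applied to $x\in LSC(\omega_f)$ shows $\omega_f(x)$ is a minimal set for $f$ (this is part of the conclusion of Theorem~3.1). For $(2)\Rightarrow(1)$: if $\omega_f(x)$ is chain stable then $x\in USC(\omega_f)$ by Corollary~2.1, hence $\omega_f(x)=\Omega_f(x)$ by Theorem~2.1; then for every $y\in\Omega_f(x)=\omega_f(x)$ minimality gives $\omega_f(y)=\omega_f(x)$, so the inclusion $\omega_f(x)\subset\omega_f(y)$ holds trivially and Theorem~3.1 yields $x\in LSC(\omega_f)$. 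Thus $x\in USC(\omega_f)\cap LSC(\omega_f)=C(\omega_f)$.

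For $(2)\Rightarrow(3)$: assume $\omega_f(x)$ is chain stable and minimal. Given any $z\in C_f(x)\subset\omega_f^\ast(x)$, there is $y\in\omega_f(x)$ with $y\rightarrow z$, so chain stability of $\omega_f(x)$ forces $z\in\omega_f(x)$; hence $C_f(x)\subset\omega_f(x)$ and therefore $C_f(x)=\omega_f(x)$. In particular $C_f(x)$ is chain stable, i.e.\ $C_f(x)\in\mathcal{C}_{\rm ter}(f)$, and it is a minimal set for $f$. For $(3)\Rightarrow(2)$: since $\omega_f(x)$ is a non-empty closed subset of $C_f(x)$ with $f(\omega_f(x))=\omega_f(x)$, minimality of $C_f(x)$ forces $\omega_f(x)=C_f(x)$, which by hypothesis is chain stable and a minimal set for $f$.

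The proof is essentially bookkeeping that chains together earlier results, so I do not anticipate a serious obstacle. The one genuinely substantive point is the observation in $(2)\Rightarrow(3)$ that chain stability of $\omega_f(x)$ collapses the chain component $C_f(x)$ back onto $\omega_f(x)$ via the representation of $\omega_f^\ast(x)$; I would make sure this step is stated cleanly. I would also be careful, in $(2)\Rightarrow(1)$, to point out that the universally quantified hypothesis of Theorem~3.1 becomes trivial precisely because $\Omega_f(x)$ has already shrunk to the minimal set $\omega_f(x)$.
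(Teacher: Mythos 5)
Your proof is correct and follows essentially the route the paper intends: the paper states this corollary as a direct consequence of Theorems~2.1 and~3.1 (with Corollary~2.1 supplying the chain-stability reformulation), and your $(1)\Leftrightarrow(2)$ is exactly that, while your $(2)\Leftrightarrow(3)$ correctly fills in the routine identification $C_f(x)=\omega_f(x)$ via chain stability and the representation of $\omega_f^\ast(x)$ in one direction, and via minimality of $C_f(x)$ in the other.
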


The notion of {\em chain continuity} was introduced in \cite{A2}. Let $f\colon X\to X$ be a continuous map and let $x\in X$. We say that $f$ is
\begin{itemize}
\item {\em equicontinuous} at $x$ if for any $\epsilon>0$, there is $\delta>0$ such that $d(x,y)\le\delta$ implies
\[
\sup_{i\ge0}d(f^i(x),f^i(y))\le\epsilon
\]
for all $y\in X$,
\item {\em chain continuous} at $x$ if for any $\epsilon>0$, there is $\delta>0$ such that every $\delta$-pseudo orbit $(x_i)_{i\ge0}$ of $f$ with $x_0=x$ is $\epsilon$-shadowed by $x$, i.e., satisfies
\[
\sup_{i\ge0}d(x_i,f^i(x))\le\epsilon.
\]
\end{itemize}
We denote by $EC(f)$ (resp.\:$CC(f)$) the set of equicontinuity (resp.\:chain continuity) points for $f$. Note that
\[
CC(f)=Sh(f)\cap EC(f) 
\]
and $EC(f)\subset C(\omega_f)$.

Given a continuous map $f\colon X\to X$, let $h_{\rm top}(f)$ denote the topological entropy of $f$ and let $Ent(f)$ denote the set of entropy points for $f$. Note that $h_{\rm top}(f)>0$ if and only if $Ent(f)\ne\emptyset$ (see \cite{YZ}). 

\begin{rem}
\normalfont
Let $f\colon X\to X$ be a continuous map and let $x\in X$.
\begin{itemize}
\item It is known that $x\in CC(f)$ if and only if
\begin{itemize}
\item $C_f(x)\in\mathcal{C}_{\rm ter}(f)$,
\item $C_f(x)$ is a periodic orbit or an odometer
\end{itemize}
(see, e.g., \cite[Theorem 7.5]{AHK}).
\item For any $C\in\mathcal{C}_{\rm ter}(f)$, if $X$ is locally connected and $C$ is totally disconnected, then $C$ is a periodic orbit or an odometer (see, e.g., \cite{HH}).
\item We know that if $x\in Sh(f)\setminus Ent(f)$, then every $C\in\mathcal{C}(f)$ with $x\rightarrow y$ for some $y\in C$ is a periodic orbit or an odometer (see \cite[Theorem 1.1]{K}). 
\end{itemize}
\end{rem}

By Remark 4.2, we have the following lemma.

\begin{lem}
Given a continuous map $f\colon X\to X$ and $x\in X$, let $C_f(x)\in\mathcal{C}_{\rm ter}(f)$.
\begin{itemize}
\item[(1)] If $X$ is locally connected and $C_f(x)$ is totally disconnected, then $x\in CC(f)$.
\item[(2)] If $x\in Sh(f)\setminus Ent(f)$, then $x\in CC(f)$ and so $x\in C(\omega_f)$.
\end{itemize}
\end{lem}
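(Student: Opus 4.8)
The plan is to deduce both parts directly from the three bullets of Remark 4.2, using only the standing hypothesis $C_f(x)\in\mathcal{C}_{\rm ter}(f)$ together with the elementary facts about $\omega$-limit sets recorded in Section 1. No new construction is needed; the work is entirely in chaining together the quoted statements.

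For part (1), I would argue as follows. Since $X$ is locally connected and $C_f(x)$ is totally disconnected, the second bullet of Remark 4.2 applied to the terminal chain component $C=C_f(x)$ shows that $C_f(x)$ is a periodic orbit or an odometer. Combining this with $C_f(x)\in\mathcal{C}_{\rm ter}(f)$, the first bullet of Remark 4.2 (the characterization of $CC(f)$) yields $x\in CC(f)$.

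For part (2), the only extra observation is that the chain component $C_f(x)$ is reachable from $x$ by chains. Indeed, $\omega_f(x)\ne\emptyset$ by compactness of $X$, so I can pick $y\in\omega_f(x)$; then $x\rightarrow y$ by the fact ``$x\rightarrow y$ for all $y\in\omega_f(x)$'' from Section 1, and $y\in\omega_f(x)\subset C_f(x)$. Since $x\in Sh(f)\setminus Ent(f)$, the third bullet of Remark 4.2 then gives that $C_f(x)$ is a periodic orbit or an odometer, and again the first bullet of Remark 4.2 together with $C_f(x)\in\mathcal{C}_{\rm ter}(f)$ gives $x\in CC(f)$. Finally, $x\in C(\omega_f)$ follows from $CC(f)=Sh(f)\cap EC(f)\subset EC(f)\subset C(\omega_f)$.

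Since the whole argument reduces to quoting Remark 4.2, I do not expect any real obstacle; the only step requiring verification, namely that $x\rightarrow y$ for some $y\in C_f(x)$ in part (2), is immediate from $\omega_f(x)\subset C_f(x)$ and the chain-reachability of $\omega_f(x)$ from $x$.
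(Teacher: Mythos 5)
Your proposal is correct and is precisely the argument the paper intends: the paper gives no separate proof of Lemma 4.1, stating only ``By Remark 4.2, we have the following lemma,'' and your chaining of the three bullets (including the observation that $x\rightarrow y$ for some $y\in\omega_f(x)\subset C_f(x)$, needed to invoke the third bullet, and the inclusion $CC(f)\subset EC(f)\subset C(\omega_f)$ for the final claim) fills in exactly the intended details.
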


By Corollary 4.1 and Lemma 4.1, we obtain the following corollaries.

\begin{cor}
Let $f\colon X\to X$ be a continuous map and let $x\in Sh(f)$.
\begin{itemize}
\item If $C_f(x)\in\mathcal{C}_{\rm ter}(f)$ and $x\not\in C(\omega_f)$, then $x\in Ent(f)$.
\item If $x\in USC(\omega_f)\cup LSC(\omega_f)$ and $x\not\in CC(f)$, then $x\in Ent(f)$.
\item If $x\in USC(\omega_f)\setminus LSC(\omega_f)$ or $x\in LSC(\omega_f)\setminus USC(\omega_f)$, then $x\in Ent(f)$.
\end{itemize}
\end{cor}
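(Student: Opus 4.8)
The plan is to obtain all three bullets as formal consequences of Corollary 4.1 and Lemma 4.1(2), the latter read in contrapositive form, so that essentially no new dynamical argument is required. Throughout, $x\in Sh(f)$ is a standing hypothesis, and I will repeatedly use the inclusion $CC(f)\subset C(\omega_f)$, which follows at once from $CC(f)=Sh(f)\cap EC(f)$ together with $EC(f)\subset C(\omega_f)$.

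For the first bullet I would argue by contraposition: suppose $x\notin Ent(f)$, so that $x\in Sh(f)\setminus Ent(f)$. If in addition $C_f(x)\in\mathcal{C}_{\rm ter}(f)$, then Lemma 4.1(2) gives $x\in CC(f)$, hence $x\in C(\omega_f)$. Therefore the two hypotheses $C_f(x)\in\mathcal{C}_{\rm ter}(f)$ and $x\notin C(\omega_f)$ together force $x\in Ent(f)$.

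For the second bullet I would first invoke Corollary 4.1: since $x\in Sh(f)$ and $x\in USC(\omega_f)\cup LSC(\omega_f)$, we get $C_f(x)\in\mathcal{C}_{\rm ter}(f)$. Now if $x\notin Ent(f)$, then $x\in Sh(f)\setminus Ent(f)$ and Lemma 4.1(2) yields $x\in CC(f)$, contradicting $x\notin CC(f)$; hence $x\in Ent(f)$. Equivalently, this is the first bullet once one notes that, under $x\notin Ent(f)$ and $x\in USC(\omega_f)\cup LSC(\omega_f)$, the chain component $C_f(x)$ is automatically terminal. For the third bullet, observe that belonging to $USC(\omega_f)\setminus LSC(\omega_f)$ or to $LSC(\omega_f)\setminus USC(\omega_f)$ means precisely that $x\in USC(\omega_f)\cup LSC(\omega_f)$ while $x\notin USC(\omega_f)\cap LSC(\omega_f)=C(\omega_f)$; since $CC(f)\subset C(\omega_f)$, we also have $x\notin CC(f)$, so the second bullet applies and gives $x\in Ent(f)$.

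There is no genuine obstacle here: the substance lies entirely in Corollary 4.1 and Lemma 4.1. The only point deserving a little care is the bookkeeping of the inclusions $CC(f)=Sh(f)\cap EC(f)\subset C(\omega_f)$ and the contrapositive reading of Lemma 4.1(2) — namely that "$x\in Sh(f)$, $C_f(x)$ terminal, and $x\notin CC(f)$" imply $x\in Ent(f)$ — which is exactly the implication driving all three cases.
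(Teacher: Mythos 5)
Your proposal is correct and follows exactly the route the paper intends: the paper derives this corollary directly from Corollary 4.1 and Lemma 4.1(2), together with the inclusion $CC(f)=Sh(f)\cap EC(f)\subset C(\omega_f)$, which is precisely the bookkeeping you carry out. The contrapositive reading of Lemma 4.1(2) and the observation that the third bullet reduces to the second via $C(\omega_f)=USC(\omega_f)\cap LSC(\omega_f)$ are both as the paper has it.
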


Given a continuous map $f\colon X\to X$ and $x\in X$, $x$ is called a {\em regularly recurrent point} for $f$ if for any $\epsilon>0$, there is $k\ge1$ such that
\[
\sup_{j\ge0}d(x,f^{jk}(x))\le\epsilon.
\]
We denote by $RR(f)$ the set of regularly recurrent points for $f$. For use in the next section, we shall prove the following.

\begin{lem}
Every continuous map $f\colon X\to X$ satisfies $CR(f)\cap CC(f)\subset RR(f)$.
\end{lem}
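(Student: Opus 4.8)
The plan is to unwind the definitions of chain recurrence and chain continuity and to glue them together by means of a periodic pseudo-orbit. Fix $x\in CR(f)\cap CC(f)$ and let $\epsilon>0$ be arbitrary. Since $x\in CC(f)$, choose $\delta>0$ such that every $\delta$-pseudo orbit $(x_i)_{i\ge0}$ of $f$ with $x_0=x$ is $\epsilon$-shadowed by $x$, that is, satisfies $\sup_{i\ge0}d(x_i,f^i(x))\le\epsilon$.

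Next, since $x\in CR(f)$ we have $x\rightarrow x$, so there is a $\delta$-chain $(z_i)_{i=0}^k$ of $f$ with $z_0=z_k=x$ and $k\ge1$. Form the $k$-periodic sequence $\xi=(x_i)_{i\ge0}$ given by $x_i=z_{i\bmod k}$ with $i\bmod k\in\{0,1,\dots,k-1\}$, i.e.\ $\xi=(z_0,z_1,\dots,z_{k-1},z_0,z_1,\dots,z_{k-1},\dots)$. I would then check that $\xi$ is a $\delta$-pseudo orbit of $f$ with $x_0=x$: within each block this is immediate from $d(f(z_i),z_{i+1})\le\delta$, and at the seams $d(f(z_{k-1}),z_0)=d(f(z_{k-1}),z_k)\le\delta$, which is exactly where the hypothesis $z_k=x$ (rather than $z_k$ arbitrary) is used.

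Finally, applying chain continuity to $\xi$ gives $d(x_i,f^i(x))\le\epsilon$ for all $i\ge0$, and evaluating at the multiples of $k$, where $x_{jk}=z_0=x$, yields $d(x,f^{jk}(x))\le\epsilon$ for all $j\ge0$, i.e.\ $\sup_{j\ge0}d(x,f^{jk}(x))\le\epsilon$. Since $\epsilon>0$ was arbitrary, $x\in RR(f)$. I do not expect a genuine obstacle here; the only point needing a little care is the verification that the periodic concatenation of a $\delta$-chain from $x$ to $x$ is genuinely a $\delta$-pseudo orbit (the closing-up condition). An alternative, more roundabout route would first show $x\in C_f(x)$ using $x\in CR(f)$ (so that $x\leftrightarrow y$ for $y\in\omega_f(x)$), invoke Remark 4.2 to conclude that $C_f(x)$ is a periodic orbit or an odometer, and then cite that every point of an odometer is regularly recurrent; but the direct pseudo-orbit argument above is shorter and self-contained.
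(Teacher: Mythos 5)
Your proof is correct and follows essentially the same route as the paper: concatenate a $\delta$-chain from $x$ to itself into a periodic $\delta$-pseudo orbit, apply chain continuity at $x$, and read off the estimate at the multiples of $k$. The extra verification of the seam condition is a fine (if routine) addition.
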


\begin{proof}
Let $x\in CR(f)\cap CC(f)$. For any $\epsilon>0$, since $x\in CC(f)$, there is $\delta>0$ such that every $\delta$-pseudo orbit $(x_i)_{i\ge0}$ of $f$ with $x_0=x$ satisfies
\[
\sup_{i\ge0}d(x_i,f^i(x))\le\epsilon.
\]
Since $x\in CR(f)$, we have a $\delta$-chain $\alpha=(y_i)_{i=0}^k$ of $f$ with $y_0=y_k=x$. Consider the periodic $\delta$-pseudo orbit
\[
\xi=(x_i)_{i\ge0}=\alpha\alpha\alpha\cdots
\]
of $f$. It follows that
\[
\sup_{j\ge0}d(x,f^{jk}(x))=\sup_{j\ge0}d(x_{jk},f^{jk}(x))\le\epsilon.
\]
Since $\epsilon>0$ is arbitrary, we obtain $x\in RR(f)$; therefore, $CR(f)\cap CC(f)\subset RR(f)$, proving the lemma. 
\end{proof}

\section{Chain continuity (I)}

The aim of this section is to prove the following two statements.

\begin{thm}
Given a continuous map $f\colon X\to X$ with $X=Sh(f)$, the following conditions are equivalent
\begin{itemize}
\item $X=LSC(\omega_f)$,
\item $X=CC(f)$.
\end{itemize}
\end{thm}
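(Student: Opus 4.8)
I would prove the two implications separately.

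The implication $X=CC(f)\Rightarrow X=LSC(\omega_f)$ is immediate from the bookkeeping already set up: $CC(f)=Sh(f)\cap EC(f)$ and $EC(f)\subset C(\omega_f)=USC(\omega_f)\cap LSC(\omega_f)\subset LSC(\omega_f)$, so $X=CC(f)$ gives $X=CC(f)\subset LSC(\omega_f)\subset X$, hence $X=LSC(\omega_f)$.

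For the converse, assume $X=Sh(f)=LSC(\omega_f)$; since $CC(f)\subset X$ trivially, it suffices to prove $X\subset CC(f)$. The plan is to reduce this to a single entropy statement and then rule out positive entropy. \emph{Reduction.} Fix $x\in X$. Since $x\in Sh(f)$ and $x\in LSC(\omega_f)$, Corollary 4.1 gives $C_f(x)\in\mathcal{C}_{\mathrm{ter}}(f)$. If moreover $x\notin Ent(f)$, then Lemma 4.1(2) (whose standing hypothesis $C_f(x)\in\mathcal{C}_{\mathrm{ter}}(f)$ has just been verified) yields $x\in CC(f)$. Hence $X\setminus Ent(f)\subset CC(f)$, and it remains only to show $Ent(f)=\emptyset$, equivalently (recalling $h_{\mathrm{top}}(f)>0\iff Ent(f)\ne\emptyset$) that $h_{\mathrm{top}}(f)=0$.

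\emph{Killing entropy.} Suppose for contradiction $h_{\mathrm{top}}(f)>0$. Since $X=Sh(f)$, $f$ has the shadowing property, and I would invoke the classical decomposition theory for shadowing systems (see, e.g., \cite{AH,P,AHK}): positive entropy together with shadowing forces a chain component $C\in\mathcal{C}(f)$ with $h_{\mathrm{top}}(f|_C)>0$; the restriction $f|_C$ is chain transitive and still has shadowing, hence is topologically transitive; and $f|_C$ cannot be minimal, since a minimal system with shadowing is a periodic orbit or an odometer and so has zero entropy. Choosing $z\in C$ with dense forward orbit (here $C$ is infinite, having positive entropy), one gets $\omega_f(z)=C$, which is not minimal. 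But $z\in X=Sh(f)\cap LSC(\omega_f)$, so Theorem 3.1 forces $\omega_f(z)$ to be a minimal set — a contradiction. Therefore $h_{\mathrm{top}}(f)=0$, so $Ent(f)=\emptyset$, and by the reduction $X\subset CC(f)$, completing the proof.

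The reduction is routine given the corollaries of Sections 2--4; the genuine content is the entropy statement, and the delicate points are the structural inputs used there: that shadowing passes to chain components, and that a minimal system with shadowing is a periodic orbit or an odometer. I would either cite these in precisely the form stated or, failing a clean citation, reprove the special cases needed (chain transitive $+$ shadowing $\Rightarrow$ topologically transitive is elementary once finite pseudo-orbit shadowing is noted; for the minimal case one uses the cyclic decomposition of a chain transitive shadowing system). An alternative that avoids restricting shadowing to chain components is the ``horseshoe'' route: shadowing together with $h_{\mathrm{top}}(f)>0$ produces, for some $n\ge1$, a closed invariant set on which $f^n$ factors onto a full shift, and any point mapping to a transitive point of that shift has a non-minimal $\omega$-limit set (a factor of a minimal system is minimal, whereas a full shift is not), again contradicting Theorem 3.1.
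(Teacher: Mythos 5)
Your proposal is correct, and its overall architecture coincides with the paper's: the easy direction via $CC(f)=Sh(f)\cap EC(f)\subset C(\omega_f)\subset LSC(\omega_f)$ is exactly what the paper leaves implicit, and your reduction (Corollary 4.1 gives $C_f(x)\in\mathcal{C}_{\rm ter}(f)$, Theorem 3.1 gives minimality of every $\omega_f(x)$, Lemma 4.1(2) then reduces everything to showing $Ent(f)=\emptyset$) is precisely the content of the paper's Lemma 5.1. The difference lies in how positive entropy is killed. The paper takes what you call the ``horseshoe route'': by \cite[Theorem 3.7]{LO} there are $m\ge1$ and a closed $f^m$-invariant set $Y$ factoring onto the full one-sided shift; lifting a transitive point $p$ of the shift to $q\in Y$, one finds $r\in\omega_{f^m}(q)$ with $\pi(r)=p$, and since $\omega_f(q)\subset M(f)=M(f^m)$ (every $\omega$-limit set being minimal), $r$ is a minimal point for $f^m$, so $p=\pi(r)$ is a minimal point for $\sigma$ and $\{0,1\}^{\mathbb N}=\omega_\sigma(p)$ would be minimal --- a contradiction. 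Your alternative route is exactly this, modulo one small point you gloss over: the factor map semiconjugates $f^m|_Y$ with $\sigma$, so the non-minimality you extract is that of $\omega_{f^m}(q)$ for $f^m$, while Theorem 3.1 speaks of $\omega_f(q)$ for $f$; the clean bridge is the identity $M(f)=M(f^m)$ (already recorded in Section 5 with a reference to \cite{BC}) applied to minimal \emph{points} rather than minimal \emph{sets}, as above. Your primary route --- locating a chain component $C$ with $h_{\rm top}(f|_C)>0$, restricting shadowing to $C$, and invoking ``minimal $+$ shadowing $\Rightarrow$ periodic orbit or odometer'' --- is a genuinely different argument, and you are right to flag its inputs as the delicate points: the claim that shadowing passes to the restriction on a chain component is true but its proof is not elementary (chain components can accumulate on one another, so the shadowing point of a pseudo-orbit lying in $C$ need not lie in $C$), and likewise one must justify that the supremum of entropies over chain components realizes $h_{\rm top}(f)$. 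If you go that way, the real work of the proof migrates into those two citations; the horseshoe route localizes all the heavy lifting into the single reference \cite[Theorem 3.7]{LO} and is the more economical choice here.
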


\begin{prop}
For a continuous map $f\colon X\to X$ such that $X$ is connected, the following conditions are equivalent
\begin{itemize}
\item $X=CC(f)$,
\item $\bigcap_{i\ge0}f^i(X)$ is a singleton.
\end{itemize}
\end{prop}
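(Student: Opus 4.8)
The plan is to prove the two implications separately, writing $Y=\bigcap_{i\ge0}f^i(X)$ throughout and recording two facts used in both directions. Since $f(X)\subset X$, the sets $f^i(X)$ form a decreasing sequence of non-empty compact sets; hence $\bigcap_{i\ge0}f^i(X)=Y$ is the Hausdorff limit of the $f^i(X)$, and $f(Y)=\bigcap_{i\ge1}f^i(X)=Y$, so $f|_Y\colon Y\to Y$ is surjective. Moreover, when $X$ is connected every $f^i(X)$ is connected (a continuous image of a connected set), so $Y$, being a nested intersection of compact connected sets, is connected.

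In the direction where $Y$ is a singleton $\{p\}$ and we must show $X=CC(f)$: here $f(p)=p$, and for each $\epsilon>0$ there is $N\ge1$ with $f^N(X)\subset B_{\epsilon}(p)$, hence $f^i(X)\subset B_{\epsilon}(p)$ for all $i\ge N$. Equicontinuity at an arbitrary $x$ follows by splitting at time $N$: for $i\ge N$ the points $f^i(x),f^i(y)$ both lie in a small ball about $p$, while $f^0,\dots,f^{N-1}$ are (uniformly) continuous; thus $X=EC(f)$. For shadowing I would fix $\epsilon$, choose $N$ with $f^N(X)\subset B_{\epsilon/3}(p)$, and then choose $\delta$ small enough (via the standard finite-time tracking estimate for $\delta$-chains of length $N$) so that every $\delta$-pseudo orbit $(x_i)_{i\ge0}$ is $\epsilon$-shadowed by $x_0$: for $i\le N$ this is the tracking estimate applied to $(x_0,\dots,x_i)$, and for $i>N$ one writes $d(f^i(x_0),x_i)\le d(f^i(x_0),p)+d(p,x_i)$, where $f^i(x_0)\in f^N(X)$ is close to $p$ and, applying the tracking estimate to the length-$N$ segment $(x_{i-N},\dots,x_i)$, $x_i$ is close to $f^N(x_{i-N})\in f^N(X)$, hence close to $p$. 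This gives $X=Sh(f)$, so $X=Sh(f)\cap EC(f)=CC(f)$.

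In the direction where $X=CC(f)$ and we must show $Y$ is a singleton: since $CC(f)=Sh(f)\cap EC(f)$, the hypothesis gives $X=EC(f)$, so $f$ is equicontinuous. First I claim $Y\subset CR(f)$. Given $y\in Y$, surjectivity of $f|_Y$ provides a full backward orbit $(y_{-i})_{i\ge0}\subset Y$ with $y_0=y$ and $f(y_{-i-1})=y_{-i}$; pass to a subsequence $y_{-n_k}\to q\in Y$. By equicontinuity at $q$, $d(f^{n_k}(q),f^{n_k}(y_{-n_k}))=d(f^{n_k}(q),y)\to0$ with $n_k\to\infty$, so $y\in\omega_f(q)$; since $u\rightarrow u$ for every $u\in\omega_f(q)$, we get $\omega_f(q)\subset CR(f)$ and so $y\in CR(f)$. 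Now $Y$ is a connected subset of $CR(f)$, so by Remark 4.1 we have $u\rightarrow v$ (and $v\rightarrow u$) for all $u,v\in Y$; thus all points of $Y$ are $\leftrightarrow$-equivalent, i.e.\ $Y$ is contained in a single chain component $C\in\mathcal{C}(f)$. Since $f(C)=C$, we have $C\subset f^i(X)$ for every $i$, so $C\subset Y$ and therefore $Y=C$. Finally, for $y\in Y$ we have $\omega_f(y)\subset Y=C$, so $C_f(y)=C$; as $y\in CC(f)$, Remark 4.2 tells us $C$ is a periodic orbit or an odometer, and since $Y=C$ is connected it must be a single point — a connected finite set, or a connected subset of a Cantor set, is a singleton.

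The conceptual heart, and the step I expect to be the main obstacle, is this last direction: one must notice that connectedness of $X$ passes to $Y=\bigcap f^i(X)$, and then that equicontinuity forces $Y\subset CR(f)$ so that Remark 4.1 collapses $Y$ into one chain component, which $f$-invariance identifies with all of $Y$; the chain-continuity hypothesis (through Remark 4.2) then makes this connected chain component a periodic orbit or an odometer, and connectedness leaves only a point. The genuinely technical ingredient there is the verification that $Y\subset CR(f)$ using equicontinuity alone, for which the backward-orbit limit-point argument above seems cleanest; the converse implication is routine once the finite-time tracking estimate for pseudo orbits is available.
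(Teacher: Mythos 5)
Your proof is correct, but it takes a genuinely different route from the paper's in both directions. For the implication from $X=CC(f)$ to the singleton, the paper restricts $f$ to $S=\bigcap_{i\ge0}f^i(X)$, obtaining a surjective chain-continuous system on a connected compactum, and then applies its Lemma 5.3, which shows via the product system $f\times f$ (and Lemma 5.2) that such a space is totally disconnected; this is self-contained and gives a stronger standalone fact. You instead prove $Y\subset CR(f)$ by an equicontinuity/backward-orbit argument (a clean alternative to the paper's Lemma 5.2, which uses chain continuity rather than equicontinuity for the same purpose), use Remark 4.1 to collapse the connected set $Y$ into a single chain component that invariance forces to equal $Y$, and then invoke the characterization in Remark 4.2 (the Akin--Hurley--Kennedy theorem) to conclude $Y$ is a periodic orbit or an odometer, hence a point by connectedness; this is shorter but leans on that external theorem as a black box. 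For the converse, the paper notes $CR(f)=\{p\}$ and again cites Remark 4.2 to get $x\in CC(f)$, whereas you verify chain continuity directly by an $\epsilon$--$\delta$ argument combining the attraction of $f^N(X)$ to $p$ with the finite-time tracking estimate --- more elementary and independent of Remark 4.2. All the steps you use (connectedness and $f$-invariance of $Y$, non-emptiness, the Hausdorff convergence $f^i(X)\to Y$, and the deduction that a connected periodic orbit or odometer is a singleton) are sound.
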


\begin{rem}
\normalfont
In Appendix A, we show that for any continuous map $f\colon X\to X$, $X=USC(\omega_f)$ implies $X=LSC(\omega_f)$ and so $X=C(\omega_f)$.
\end{rem}

Given a continuous map $f\colon X\to X$ and $x\in X$, $x$ is called a {\em minimal point} for $f$ if
\begin{itemize}
\item $x\in\omega_f(x)$,
\item $\omega_f(x)$ is a minimal set for $f$.
\end{itemize}
We denote by $M(f)$ the set of minimal points for $f$. Note that
\begin{itemize}
\item $M(f)$ is the (disjoint) union of all minimal sets for $f$,
\item $M(f)=M(f^m)$ for all $m\ge1$ (see, e.g., \cite[Lemma V.7]{BC}).
\end{itemize}

Let us first prove the following lemma.

\begin{lem}
For a continuous map $f\colon X\to X$, if
\begin{itemize}
\item $X=Sh(f)$,
\item $\omega_f(x)\subset M(f)$ for all $x\in X$,
\item $C_f(x)\in\mathcal{C}_{\rm ter}(f)$ for all $x\in X$,
\end{itemize}
then $X=CC(f)$.
\end{lem}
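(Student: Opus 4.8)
The plan is to fix an arbitrary $x\in X$ and deduce $x\in CC(f)$, by reducing the whole assertion to the single fact $h_{\rm top}(f)=0$. Since $X=Sh(f)$ and $CC(f)=Sh(f)\cap EC(f)$, Lemma 4.1(2) gives $X\setminus Ent(f)=Sh(f)\setminus Ent(f)\subseteq CC(f)$; as $CC(f)\subseteq X$ trivially, it suffices to prove $Ent(f)=\emptyset$, which by the characterization $Ent(f)\ne\emptyset\iff h_{\rm top}(f)>0$ is exactly $h_{\rm top}(f)=0$. I also note that $X=Sh(f)$ forces $f$ to have the shadowing property, since any $\delta$-pseudo orbit $(x_i)_{i\ge0}$ starts at the shadowable point $x_0$ and so is $\epsilon$-shadowed.

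To prove $h_{\rm top}(f)=0$ I would argue by contradiction: assume $h_{\rm top}(f)>0$. Using the shadowing property and the structure theory it entails, I would locate a non-trivial transitive subsystem $Y$ carrying positive entropy — the entropy is concentrated on $CR(f)$, hence on a chain transitive piece of it, and chain transitivity together with shadowing upgrades to genuine transitivity; if convenient one first passes to a power $f^m$, for which $h_{\rm top}(f^m)=m\,h_{\rm top}(f)>0$ and $M(f^m)=M(f)$, so minimality-questions are unaffected. Since a minimal system with the shadowing property is a periodic orbit or an odometer and hence has zero entropy, $Y$ is not minimal, so a transitive point $y\in Y$ is not a minimal point; but then $Y\subseteq\omega_f(y)$ while $Y\not\subseteq M(f)$, so $\omega_f(y)\not\subseteq M(f)$, contradicting the hypothesis that $\omega_f(z)\subseteq M(f)$ for all $z\in X$. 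Hence $h_{\rm top}(f)=0$, so $Ent(f)=\emptyset$, and the reduction above gives $X=Sh(f)\setminus Ent(f)\subseteq CC(f)$, i.e.\ $X=CC(f)$.

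The reduction of the first paragraph is routine; the main obstacle I anticipate is the structural input of the second, namely that for a map with the shadowing property, positive topological entropy is incompatible with every $\omega$-limit set being contained in $M(f)$. Making this precise in the present generality (a non-invertible $f$, with chain components that need not be open in $CR(f)$) is the delicate step — one must isolate a positive-entropy chain transitive piece in $CR(f)$, transfer the shadowing property to it, and find a transitive non-minimal point there; here the hypothesis that every $C_f(x)$ is terminal, hence chain stable, is precisely what makes transferring shadowing to the relevant chain component easy. An alternative route avoiding entropy would be to prove directly that each $C_f(x)$ is a minimal set and then conclude, via the first item of Remark 4.2, from the fact that a minimal set with the shadowing property is a periodic orbit or an odometer; this meets the same difficulty. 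Everything else — the use of Lemma 4.1, the definitions of $Sh(f)$ and $CC(f)$, the identity $M(f)=M(f^m)$, and the equivalence $Ent(f)\ne\emptyset\iff h_{\rm top}(f)>0$ — is immediate.
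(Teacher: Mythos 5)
Your reduction is exactly the paper's: show $h_{\rm top}(f)=0$, hence $Ent(f)=\emptyset$, and then apply Lemma 4.1(2) to each $x$ using the hypothesis $C_f(x)\in\mathcal{C}_{\rm ter}(f)$. That part is fine (modulo the small point that $X=Sh(f)$ gives a uniform $\delta$ only after a compactness argument, as in \cite{M}). The problem is that the entropy-vanishing step, which is where all the content of the lemma lives, is not actually proved: you flag it yourself as ``the delicate step,'' and the sketch you give does not close. The paper's proof goes through \cite[Theorem 3.7]{LO}: shadowing plus $h_{\rm top}(f)>0$ yields $m\ge1$ and a closed $f^m$-invariant set $Y$ with a factor map $\pi\colon Y\to\{0,1\}^{\mathbb N}$ onto the full shift. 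One then lifts a transitive point $p$ of the shift to $q\in Y$, uses $\pi(\omega_{f^m}(q))=\omega_\sigma(p)=\{0,1\}^{\mathbb N}$ to find $r\in\omega_{f^m}(q)$ with $\pi(r)=p$, and invokes the hypothesis $\omega_f(q)\subset M(f)=M(f^m)$ together with the fact that factor maps send minimal points to minimal points to conclude $p\in M(\sigma)$, i.e., that the full shift is minimal --- a contradiction. Note that this argument uses only the hypothesis $\omega_f(x)\subset M(f)$ and the non-minimality of the full shift; it never needs shadowing on any subsystem.

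Your alternative route needs three inputs: (a) the entropy is attained on some chain component $C$; (b) the shadowing property of $f$ restricts to a shadowing property of $f|_C$ (so that chain transitivity upgrades to transitivity and, after you show $C$ is minimal, so that ``minimal $+$ shadowing $\Rightarrow$ periodic orbit or odometer'' applies); and (c) the endgame identifying a recurrent transitive point. Step (b) is the genuine gap, and your stated reason for it --- that $C$ is terminal, hence chain stable --- does not deliver it: chain stability controls where $\delta$-chains starting in $C$ can end up, but says nothing about whether the point furnished by global shadowing can be taken \emph{inside} $C$; a pseudo-orbit lying in $C$ is shadowed by some $p\in X$ with $d(f^i(p),C)\le\epsilon$ for all $i$, which is strictly weaker than shadowing for $f|_C$. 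Restriction of shadowing to invariant subsets, even terminal chain components, is a theorem when it holds, not bookkeeping, and without it both the ``transitivity of $C$'' and the final ``minimal with shadowing has zero entropy'' steps are unsupported. (The endgame also needs care: from $\omega_f(y)=C\subset M(f)$ and $y\in\omega_f(y)$ you should conclude $C$ is minimal and contradict positive entropy via (b), rather than asserting $C\not\subset M(f)$ outright.) So the proposal identifies the correct skeleton but leaves the essential step unproved; as written it is not a proof.
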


\begin{proof}
Since $X=Sh(f)$, if $h_{\rm top}(f)>0$, then there are $m\ge1$ and a closed subset $Y$ of $X$ with the following properties:
\begin{itemize}
\item $f^m(Y)\subset Y$,
\item there is a continuous surjection $\pi\colon Y\to\{0,1\}^\mathbb{N}$ such that $\pi\circ f^m|_{Y}=\sigma\circ\pi $, where $\sigma\colon\{0,1\}^\mathbb{N}\to\{0,1\}^\mathbb{N}$ is the shift map
\end{itemize}
(see, e.g., \cite[Theorem 3.7]{LO}). We take $p\in\{0,1\}^\mathbb{N}$ with $\omega_\sigma(p)=\{0,1\}^\mathbb{N}$. By taking $q\in Y$ with $\pi(q)=p$, we obtain
\[
\pi(\omega_{f^m}(q))=\omega_\sigma(\pi(q))=\omega_\sigma(p)=\{0,1\}^\mathbb{N}
\]
and so $p=\pi(r)$ for some $r\in\omega_{f^m}(q)$. By
\[
r\in\omega_{f^m}(q)\subset\omega_f(q)\subset M(f)=M(f^m),
\]
we obtain $p\in M(\sigma)$; therefore, $\{0,1\}^\mathbb{N}$ is a minimal set for $\sigma$, a contradiction. It follows that $h_{\rm top}(f)=0$ and thus $Ent(f)=\emptyset$. For every $x\in X$, by $C_f(x)\in\mathcal{C}_{\rm ter}(f)$ and $x\in Sh(f)\setminus Ent(f)$, by Lemma 4.1, we obtain $x\in CC(f)$. We conclude that $X=CC(f)$, completing the proof.
\end{proof}

By using Lemma 5.1, we prove Theorem 5.1.

\begin{proof}[Proof of Theorem 5.1]
Assume that $X=LSC(\omega_f)$. Since
\[
X=Sh(f)\cap LSC(\omega_f),
\]
by Corollary 4.1, we have $C_f(x)\in\mathcal{C}_{\rm ter}(f)$ for all $x\in X$. For any $x\in X$, as
\[
x\in Sh(f)\cap LSC(\omega_f),
\]
Theorem 3.1 implies that $\omega_f(x)$ is a minimal set for $f$. It follows that $\omega_f(x)\subset M(f)$ for all $x\in X$. By Lemma 5.1, we obtain $X=CC(f)$, completing the proof of the theorem.
\end{proof}

The proof of Proposition 5.1 relies on the following two lemmas.

\begin{lem}
If a continuous map $f\colon X\to X$ is surjective and satisfies $X=CC(f)$, then $X=CR(f)$.
\end{lem}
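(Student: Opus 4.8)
The plan is to use the surjectivity of $f$ to attach to each point $x$ a full backward orbit, to show that any accumulation point of that backward orbit chains to $x$, and then to force this accumulation point and $x$ into one and the same (terminal) chain component — the one containing $\omega_f(x)$ — so that $x\rightarrow x$.

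Fix $x\in X$. Since $f$ is surjective and $X$ is compact, a standard compactness argument produces a backward orbit $(x_i)_{i\le0}$ with $x_0=x$ and $f(x_{i-1})=x_i$ for all $i\le0$. Let $\alpha$ be the set of all limit points of $(x_i)$ as $i\to-\infty$; it is a non-empty closed subset of $X$. Arguing as in the proof of Corollary 3.1, the backward orbit yields $p\rightarrow q$ for all $p,q\in\alpha$, and also $p\rightarrow x$ for all $p\in\alpha$: given $\delta>0$, choose $x_{-k}$ so close to $p$ that $d(f(p),x_{-k+1})\le\delta$ (using $x_{-k+1}=f(x_{-k})$ and continuity of $f$), and note that $(p,x_{-k+1},x_{-k+2},\dots,x_{-1},x_0)$ is then a $\delta$-chain of $f$ from $p$ to $x$, since $f(x_{i-1})=x_i$ for $i\le 0$. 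In particular $\alpha\subset CR(f)$.

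Now fix $p\in\alpha$. Since $p\in CR(f)$ and $p\in CC(f)$ (because $X=CC(f)$), Lemma 4.2 gives $p\in RR(f)$, so $p$ is recurrent and $p\in\omega_f(p)\subset C_f(p)$. Also $p\in CC(f)\subset Sh(f)\cap USC(\omega_f)$, so Corollary 4.1 yields $C_f(p)\in\mathcal{C}_{\rm ter}(f)$, i.e.\ $C_f(p)$ is chain stable. Combining $p\rightarrow x$ with the fact that $x\rightarrow w$ for every $w\in\omega_f(x)$ gives $p\rightarrow w$ for all $w\in\omega_f(x)$; since $p\in C_f(p)$ and $C_f(p)$ is chain stable, this forces $\omega_f(x)\subset C_f(p)$. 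As $\omega_f(x)\subset C_f(x)$ and distinct chain components are disjoint, $C_f(x)=C_f(p)$. Hence $p\in C_f(x)$, so for any $w\in\omega_f(x)\subset C_f(x)$ we have $w\rightarrow p$; together with $x\rightarrow w$ this gives $x\rightarrow p$, and with $p\rightarrow x$ we conclude $x\rightarrow x$, i.e.\ $x\in CR(f)$. Since $x\in X$ was arbitrary, $X=CR(f)$.

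The step I expect to be the crux is establishing $p\in C_f(p)$: an accumulation point of a backward orbit need not a priori be forward-recurrent, and it is precisely the hypothesis $X=CC(f)$ — entering through $CR(f)\cap CC(f)\subset RR(f)$ — that rescues this. Surjectivity is used once, and only, to manufacture the backward orbit, hence the point $p$ lying ``behind'' $x$; everything else is bookkeeping with chains, terminal chain components, and the inclusions already available from Sections 2--4.
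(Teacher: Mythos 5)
Your proof is correct. The opening is identical to the paper's: both arguments use surjectivity to build a full backward orbit $(x_i)_{i\le 0}$ through $x$, form the set $\alpha$ of its accumulation points, and observe that every $p\in\alpha$ satisfies $p\rightarrow x$ (and $p\rightarrow q$ for all $p,q\in\alpha$). Where you genuinely diverge is in how the hypothesis $X=CC(f)$ closes the loop. The paper applies the definition of chain continuity at $p$ in its raw form: the $\delta$-chain from $p$ to $x$ is a $\delta$-pseudo-orbit starting at $p$, so $x$ lies within $\epsilon$ of some $f^k(p)$, which belongs to $\alpha$ because $f(\alpha)=\alpha$; letting $\epsilon\to 0$ gives $x\in\alpha$, and the internal chain transitivity of $\alpha$ yields $x\rightarrow x$ in one stroke. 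You instead route through the catalogued consequences of chain continuity from Section 4: $p\in CR(f)\cap CC(f)\subset RR(f)$ (Lemma 4.2) puts $p$ in $\omega_f(p)\subset C_f(p)$; the inclusion $CC(f)\subset Sh(f)\cap USC(\omega_f)$ together with Corollary 4.1 makes $C_f(p)$ terminal; chain stability then drags $\omega_f(x)$ into $C_f(p)$, forcing $C_f(p)=C_f(x)$, after which $x\rightarrow w\rightarrow p\rightarrow x$ for $w\in\omega_f(x)$. Every step checks out, including the explicit $\delta$-chain $(p,x_{-k+1},\dots,x_0)$ witnessing $p\rightarrow x$. The paper's closing is shorter and self-contained; yours shows the same conclusion follows from the softer downstream facts (regular recurrence on $CR(f)$ and terminality of $C_f(\cdot)$), at the cost of importing Lemma 4.2 and Corollary 4.1, and it correctly isolates the crux: without $X=CC(f)$ there is no reason for the accumulation point $p$ of the backward orbit to tie back to $x$.
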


\begin{proof}
Let $x\in X$. Since $f$ is surjective, there is a sequence $x_i\in X$, $i\le0$, such that
\begin{itemize}
\item $x_0=x$,
\item $f(x_{i-1})=x_i$ for all $i\le0$. 
\end{itemize}
Let $\alpha$ be the set of $y\in X$ such that $\lim_{j\to\infty}x_{i_j}=y$ for some $0\ge i_1>i_2>\cdots$. Note that
\begin{itemize}
\item $\alpha$ is a closed subset of $X$ and satisfies $f(\alpha)=\alpha$,
\item $\lim_{i\to-\infty}d(x_i,\alpha)=0$,
\item $y\rightarrow z$ for all $y,z\in\alpha$.
\end{itemize}
By taking $y\in\alpha$, we obtain $y\rightarrow x$. This with $y\in CC(f)$ implies $x\in\alpha$. It follows that $x\rightarrow x$, i.e., $x\in CR(f)$. Since $x\in X$ is arbitrary, we conclude that $X=CR(f)$, proving the lemma.
\end{proof}

\begin{lem}
If a continuous map $f\colon X\to X$ is surjective and satisfies $X=CC(f)$, then $X$ is totally disconnected.
\end{lem}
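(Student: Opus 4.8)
The plan is to reduce the statement to the structure of the chain components $C_f(x)$, using the results already in place. First I would invoke Lemma 5.2: since $f$ is surjective and $X=CC(f)$, we get $X=CR(f)$. Combining this with Lemma 4.2 gives $X=CR(f)\cap CC(f)\subset RR(f)$, so every point of $X$ is regularly recurrent; in particular, for each $x\in X$ the defining inequality for $RR(f)$ (with some $k\ge 1$) forces $x\in\omega_f(x)$. Hence $x\in\omega_f(x)\subset C_f(x)$, which means $C_f(x)$ is precisely the chain component containing $x$.

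Next I would apply Remark 4.2: because $x\in CC(f)$ for every $x\in X$, its first item says that $C_f(x)$ is a periodic orbit or an odometer. In either case $C_f(x)$ is totally disconnected (a finite set, or an inverse limit of finite groups).

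Finally I would pass from chain components to connected components. Let $K$ be a connected component of $X$. Since $K\subset X=CR(f)$ and $K$ is connected, Remark 4.1 gives $x\rightarrow y$ (and $y\rightarrow x$) for all $x,y\in K$, so $K$ is contained in a single chain component; picking any $x\in K$, that chain component is $C_f(x)$ by the first step, and $C_f(x)$ is totally disconnected. Therefore $K$ is a singleton, and $X$ is totally disconnected.

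I do not expect a serious obstacle once Lemma 5.2 and Remark 4.2 are available; the argument is essentially bookkeeping. The only points needing care are (i) deducing $x\in\omega_f(x)$ from regular recurrence, so that $C_f(x)$ genuinely is the chain component of $x$ rather than merely a chain component meeting $\omega_f(x)$, and (ii) using Remark 4.1 to prevent a nontrivial connected component from being spread across several chain components. The real content of the lemma is carried by Lemma 5.2, where surjectivity is used to build full backward orbits, and by the classification quoted in Remark 4.2.
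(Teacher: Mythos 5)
Your proof is correct, but it follows a genuinely different route from the paper's. The paper argues by contradiction with a product trick: it forms $g=f\times f$, notes $CC(g)=CC(f)\times CC(f)=X\times X$ and applies Lemma 5.2 to $g$ to get $X\times X=CR(g)$; then for a nondegenerate connected set $S$ and $p,q,r\in S$ with $q\ne r$, Remark 4.1 applied to $S\times S$ yields $(p,p)\rightarrow(q,r)$, i.e.\ two $\delta$-chains starting at $p$ with distinct endpoints $q,r$, which directly contradicts chain continuity at $p$. Your argument instead imports the classification from the first item of Remark 4.2 (that $C_f(x)$ is a periodic orbit or an odometer for $x\in CC(f)$), together with Lemma 4.2 to place every point in $RR(f)$ and hence in its own $C_f(x)$, and then uses Remark 4.1 to trap each connected component inside a single (totally disconnected) chain component. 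Both are valid given what the paper makes available; the trade-off is that the paper's proof is essentially self-contained (only Lemma 5.2 and Remark 4.1, plus the definition of $CC(f)$), whereas yours leans on the substantially deeper cited theorem of \cite{AHK} but reduces the lemma to pure bookkeeping and avoids the product construction. You correctly identified the two delicate points: deducing $x\in\omega_f(x)$ from regular recurrence so that $C_f(x)$ really is the chain class of $x$, and using Remark 4.1 to keep a connected component from straddling several chain components.
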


\begin{proof}
Assume the contrary, i.e., there is a non-empty connected subset $S$ of $X$ which is not a singleton. We take $p,q,r\in S$ with $q\ne r$. Let $g=f\times f\colon X\times X\to X\times X$. Since $g$ is surjective and satisfies
\[
CC(g)=CC(f)\times CC(f)=X\times X,
\]
by Lemma 5.2, we have $X\times X=CR(g)$. Since $S\times S$ is a connected subset of $X\times X$, we have $(x,y)\rightarrow(z,w)$ for all $(x,y),(z,w)\in S\times S$; therefore, $(p,p)\rightarrow(q,r)$. It follows that for every $\delta>0$, there is a pair
\[
((x_i)_{i=0}^k,(y_i)_{i=0}^k)
\]
of $\delta$-chains of $f$ such that $x_0=y_0=p$ and $(x_k,y_k)=(q,r)$. Since $q\ne r$, we obtain $p\not\in CC(f)$, a contradiction. This completes the proof of the lemma. 
\end{proof}

Let us complete the proof of Proposition 5.1.

\begin{proof}[Proof of Proposition 5.1]
Let $S=\bigcap_{i\ge0}f^i(X)$ and note that $S$ is connected. Let $g=f|_S\colon S\to S$. If $X=CC(f)$, then $g$ is surjective and satisfies $S=CC(g)$. From Lemma 5.3, it follows that $S$ is totally disconnected and thus $S$ is a singleton. Conversely, if $S$ is a singleton, then letting $S=\{p\}$, we have $CR(f)=\{p\}$. For any $x\in X$, by $C_f(x)=\{p\}\in\mathcal{C}_{\rm ter}(f)$, we obtain $x\in CC(f)$. Since $x\in X$ is arbitrary, we conclude that $X=CC(f)$, completing the proof.
\end{proof}

Given a continuous map $f\colon X\to X$, we denote by $R(f)$ the set of {\em recurrent points} for $f$:
\[
R(f)=\{x\in X\colon x\in\omega_f(x)\}.
\]
Let us prove the following lemma.

\begin{lemma}
For a continuous map $f\colon X\to X$, if $X=R(f)$, then $X=LSC(\omega_f)$.
\end{lemma}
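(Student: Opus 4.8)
The plan is to verify the lower semicontinuity condition directly at an arbitrary point, exploiting one elementary observation: since $X=R(f)$, for every $y\in X$ and every $n\ge0$ the point $f^n(y)$ is recurrent, so $f^n(y)\in\omega_f(f^n(y))\subset\omega_f(y)$. In other words, when every point is recurrent, the whole forward orbit of a point sits inside its $\omega$-limit set. This is really the only structural input needed.

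First I would fix $x\in X$ and an open set $U\subset X$ with $\omega_f(x)\cap U\ne\emptyset$, and aim to produce $\delta>0$ such that $d(x,y)\le\delta$ forces $\omega_f(y)\cap U\ne\emptyset$. Choose $w\in\omega_f(x)\cap U$ and, using that $U$ is open, $\epsilon>0$ with $B_\epsilon(w)\subset U$. Since $w\in\omega_f(x)$, there is $N\ge0$ with $d(f^N(x),w)\le\epsilon/2$. By continuity of $f^N$ there is $\delta>0$ such that $d(x,y)\le\delta$ implies $d(f^N(x),f^N(y))\le\epsilon/2$, hence $d(f^N(y),w)\le\epsilon$, i.e.\ $f^N(y)\in B_\epsilon(w)\subset U$. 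By the observation above, $f^N(y)\in\omega_f(y)$, so $\omega_f(y)\cap U\ne\emptyset$. As $x$ and $U$ were arbitrary, this gives $X=LSC(\omega_f)$.

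The argument is short, so there is no real obstacle; the two things to keep straight are the inclusion $\omega_f(f^n(y))\subset\omega_f(y)$, which is immediate from the definition of the $\omega$-limit set (a limit point of $(f^{n+k_j}(y))_j$ is a limit point of $(f^i(y))_i$), and the choice of $\epsilon$ with $B_\epsilon(w)\subset U$, which uses only that $U$ is open. One could instead try to check the hypothesis of Lemma 3.1, but it is not apparent that $\omega_f(x)\subset\omega_f(y)$ holds for all $y\in\Omega_f(x)$ merely from $X=R(f)$, so the direct verification above is the cleaner route.
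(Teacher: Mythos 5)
Your proof is correct and follows essentially the same route as the paper's: both arguments find an iterate $f^N(x)\in U$ (or land $f^N(y)$ in $U$ by continuity) and then use $X=R(f)$ to conclude $f^N(y)\in\omega_f(y)$, so $\omega_f(y)\cap U\ne\emptyset$. The only cosmetic difference is that the paper deduces $f^i(y)\in\omega_f(y)$ from $y\in\omega_f(y)$ and invariance, while you use $f^N(y)\in\omega_f(f^N(y))\subset\omega_f(y)$; both are valid.
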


\begin{proof}
Given any $x\in X$ and any open subset $U$ of $X$ with $\omega_f(x)\cap U\ne\emptyset$, we have $f^i(x)\in U$ for some $i\ge0$. As $U$ is open in $X$, there is $\delta>0$ such that $d(x,y)\le\delta$ implies $f^i(y)\in U$ for all $y\in X$. Since $X=R(f)$, for each $y\in X$, we have $y\in\omega_f(y)$ and so $f^i(y)\in\omega_f(y)$. It follows that for any $y\in X$, $d(x,y)\le\delta$ implies $f^i(y)\in\omega_f(y)\cap U$; therefore, $\omega_f(y)\cap U\ne\emptyset$. Since $x\in X$ and $U$ with $\omega_f(x)\cap U\ne\emptyset$ are arbitrary, we obtain $X=LSC(\omega_f)$, completing the proof.
\end{proof}

We end this section with some remarks.

\begin{rem}
\normalfont
\begin{itemize}
\item In \cite[Theorem 3.2]{A}, it is shown that for any continuous map $f\colon X\to X$ with $X=Sh(f)$, if
\begin{itemize}
\item $X$ is connected,
\item $f$ is surjective and satisfies $X=C(\omega_f)$,
\end{itemize}
then $X$ is a minimal set for $f$. According to Theorem 5.1 and Proposition 5.1, $X$ is actually a singleton in such a case.
\item In \cite[Proposition 3.4]{A}, it is shown that for any continuous map $f\colon X\to X$ with $X=Sh(f)$, $X=C(\omega_f)$ implies $CR(f)=M(f)$. This result can be improved as follows: by Theorem 5.1 and Lemma 4.2, for any continuous map $f\colon X\to X$ with $X=Sh(f)$, $X=LSC(\omega_f)$ implies $X=CC(f)$ and thus $CR(f)=RR(f)$.
\item  Given a continuous map $f\colon X\to X$, let $\mathcal{M}_f$ denote the set of minimal sets for $f$. In \cite[Proposition 3.5]{A}, it is shown that if $X=Sh(f)$ and $X=R(f)$, then $\mathcal{M}_f$ is a closed subset of $\mathcal{K}(X)$. This can be also proved as follows. Since $X=R(f)$, by the above lemma, we have $X=LSC(\omega_f)$. Since $X=Sh(f)$ and $X=LSC(\omega_f)$, Theorem 5.1 implies that $X=CC(f)$ and so $X=C(\omega_f)$. As $X=R(f)$ implies $X=CR(f)$, by Lemma 4.2, we obtain $X=RR(f)$ and so $X=M(f)$. It follows that $\mathcal{M}_f=\omega_f(X)$; therefore, $\mathcal{M}_f$ is a closed subset of $\mathcal{K}(X)$.
\end{itemize}
\end{rem}

\section{Chain continuity (I\!I)}

The aim of this section is to prove the following theorem.

\begin{thm}
For a continuous map $f\colon X\to X$ such that
\begin{itemize}
\item $X$ is connected,
\item $CR(f)$ is totally disconnected,
\end{itemize}
the following conditions are equivalent
\begin{itemize}
\item $X=LSC(\omega_f)$,
\item $X=CC(f)$.
\end{itemize}
\end{thm}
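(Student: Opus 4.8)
The plan is to prove the two implications separately. The implication $X=CC(f)\Rightarrow X=LSC(\omega_f)$ is immediate from the facts recorded in Section~4: $CC(f)=Sh(f)\cap EC(f)\subseteq EC(f)\subseteq C(\omega_f)=USC(\omega_f)\cap LSC(\omega_f)\subseteq LSC(\omega_f)$. For the converse I would \emph{not} use shadowing at all; instead I would show that $S:=\bigcap_{i\ge0}f^i(X)$ is a singleton and then quote Proposition~5.1, which applies since $X$ is connected. Note first that $S$ is nonempty, closed and connected, being a nested intersection of the connected compact sets $f^i(X)$, and that $f(S)=S$ by compactness (so $f|_S\colon S\to S$ is surjective). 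Since $CR(f)$ is totally disconnected, it suffices to prove $S\subseteq CR(f)$: a connected subset of $CR(f)$ lies in a single connected component of $CR(f)$, hence is a singleton.

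To prove $S\subseteq CR(f)$, fix $x\in S$ and, using surjectivity of $f|_S$, choose a full backward orbit $(x_i)_{i\le 0}$ in $S$ with $x_0=x$ and $f(x_{i-1})=x_i$. Let $\alpha$ be the set of $p\in X$ with $x_{i_j}\to p$ for some $0\ge i_1>i_2>\cdots$. Exactly as in the proofs of Corollary~3.1 and Lemma~5.2, $\alpha$ is a nonempty closed set with $f(\alpha)=\alpha$, with $d(x_i,\alpha)\to0$ as $i\to-\infty$, and with $p\rightarrow q$ for all $p,q\in\alpha$. Two observations drive the argument. First, for every $p\in\alpha$ one has $p\rightarrow x$: chain inside $\alpha$ from $p$ to a point near $x_N$ for $N$ very negative, then follow the genuine orbit segment $x_N,x_{N+1},\dots,x_0=x$ (a standard uniform-continuity splicing estimate). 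Second — and this is the only place the hypothesis $X=LSC(\omega_f)$ is used — for every $p\in\alpha$ one has $\omega_f(p)\subseteq\omega_f(x)$: indeed $x_{i_j}\to p$ and $\omega_f(x_{i_j})=\omega_f(x_0)=\omega_f(x)$ for every $j$ (the forward orbit of $x_{i_j}$ eventually coincides with that of $x$), so lower semicontinuity of $\omega_f$ at $p$ forces every open set meeting $\omega_f(p)$ to meet $\omega_f(x)$, whence $\omega_f(p)\subseteq\overline{\omega_f(x)}=\omega_f(x)$.

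Now take a minimal set $M\subseteq\alpha$ by Zorn's lemma. For $q\in M$ we have $\omega_f(q)=M$, so $M\subseteq\omega_f(x)$ by the second observation, and in particular $q\in\omega_f(x)$, which gives $x\rightarrow q$ (a recorded property of $\omega$-limit sets). On the other hand $q\in M\subseteq\alpha$, so $q\rightarrow x$ by the first observation. Hence $x\rightarrow x$, i.e. $x\in CR(f)$. As $x\in S$ was arbitrary, $S\subseteq CR(f)$, so $S$ is a singleton, and Proposition~5.1 yields $X=CC(f)$.

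The main obstacle, and really the only substantive step, is extracting $\omega_f(p)\subseteq\omega_f(x)$ for $p\in\alpha$ from lower semicontinuity — i.e. recognizing that lower semicontinuity makes $\omega$-limit sets ``upward monotone'' along the backward orbit converging to $\alpha$. Once that is in hand, descending to a minimal set of $\alpha$ to produce a point that simultaneously lies in $\omega_f(x)$ and chains back to $x$ is the natural way to close the chain-recurrence loop. A secondary point requiring a little care is the standard but slightly fiddly verification that $\alpha$ is internally chain transitive and that a chain inside $\alpha$ can be spliced onto an orbit segment terminating at $x$; I would handle it exactly as the earlier sections of the paper do, or else insert the short uniform-continuity computation explicitly.
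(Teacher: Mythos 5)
Your proof is correct and takes essentially the same route as the paper: its Lemma~6.1 establishes exactly your key claim that $X=LSC(\omega_f)$ forces $S=\bigcap_{i\ge0}f^i(X)\subseteq CR(f)$, by applying lower semicontinuity at a point of the limit set $\alpha$ of a backward orbit (the paper simply takes $y\in\alpha$ and $z\in\omega_f(y)\subseteq\alpha$ where you descend to a minimal set of $\alpha$ --- an inessential variation), and then concludes, as you do, that $S$ is a singleton by connectedness versus total disconnectedness of $CR(f)$, whence $X=CC(f)$.
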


\begin{rem}
\normalfont
The chain recurrent set $CR(f)$ for $f$ in the assumption of Theorem 6.1 cannot be changed to the non-wandering set $\Omega(f)$ for $f$ (see Example 7.7).
\end{rem}

The proof of Theorem 6.1 is through the following lemma.

\begin{lem}
Let $f\colon X\to X$ be a continuous map and let $S=\bigcap_{i\ge0}f^i(X)$. If $X=LSC(\omega_f)$, then $S=CR(f)$.
\end{lem}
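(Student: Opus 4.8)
The plan is to establish the two inclusions separately, with essentially all the work going into $S\subseteq CR(f)$. The inclusion $CR(f)\subseteq S$ is immediate from facts already recorded: $CR(f)$ is closed and satisfies $f(CR(f))=CR(f)$, so $CR(f)=f^i(CR(f))\subseteq f^i(X)$ for every $i\ge0$, whence $CR(f)\subseteq\bigcap_{i\ge0}f^i(X)=S$. The hypothesis $X=LSC(\omega_f)$ will be needed only for the reverse inclusion.

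To prove $S\subseteq CR(f)$, fix $x\in S$. Since $x\in f^i(X)$ for all $i\ge0$, there is a full backward orbit, i.e.\ a sequence $(x_i)_{i\le0}$ in $X$ with $x_0=x$ and $f(x_{i-1})=x_i$ for all $i\le0$. As in the proof of Lemma~5.2, let $\alpha$ be the set of $p\in X$ with $\lim_{j\to\infty}x_{i_j}=p$ for some $0\ge i_1>i_2>\cdots$; then $\alpha$ is a non-empty closed subset of $X$ with $f(\alpha)=\alpha$, one has $\lim_{i\to-\infty}d(x_i,\alpha)=0$, and by the same chain construction used there (approximate a point of $\alpha$ by $x_i$ with $i$ very negative and then follow the genuine orbit segment $x_i,x_{i+1},\dots,x_0=x$) one gets $p\rightarrow x$ for every $p\in\alpha$. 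The observation that makes lower semicontinuity applicable is that $\omega_f(x_i)=\omega_f(x)$ for every $i\le0$, because for large $t$ the forward orbit of $x_i$ satisfies $f^t(x_i)=f^{\,t+i}(x)$, so the orbits of $x_i$ and $x$ have the same tail.

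Now I would bring in $X=LSC(\omega_f)$. Pick any $y\in\alpha$ together with a subsequence $x_{i_j}\to y$. I claim $\omega_f(y)\subseteq\omega_f(x)$: otherwise $U:=X\setminus\omega_f(x)$ is open and meets $\omega_f(y)$, so lower semicontinuity of $\omega_f$ at $y$ yields $\delta>0$ with $\omega_f(z)\cap U\ne\emptyset$ whenever $d(y,z)\le\delta$; taking $z=x_{i_j}$ for $j$ large gives $\omega_f(x)\cap U=\omega_f(x_{i_j})\cap U\ne\emptyset$, a contradiction. Since $\alpha$ is closed with $f(\alpha)=\alpha$, we also have $\omega_f(y)\subseteq\alpha$, so choosing any $p\in\omega_f(y)$, which is non-empty, gives $p\in\omega_f(x)\cap\alpha$. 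Then $p\in\omega_f(x)$ gives $x\rightarrow p$ (a recorded property of $\omega_f$), $p\in\alpha$ gives $p\rightarrow x$, and transitivity of $\rightarrow$ (concatenating the two families of $\delta$-chains) yields $x\rightarrow x$, i.e.\ $x\in CR(f)$. As $x\in S$ is arbitrary, $S\subseteq CR(f)$, and combined with the first inclusion, $S=CR(f)$.

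The only non-mechanical idea is the middle step: recognizing that $\omega_f$ is constant along the backward orbit, which is precisely what allows the convergence $x_{i_j}\to y$ to be fed into lower semicontinuity and force $\omega_f(y)\subseteq\omega_f(x)$. Once $\omega_f(x)\cap\alpha\ne\emptyset$ is secured, the remainder is the same chain bookkeeping already used in the proof of Lemma~5.2, and no shadowing hypothesis is required anywhere.
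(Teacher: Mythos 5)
Your proposal is correct and follows essentially the same route as the paper: realize $x\in S$ as the endpoint of a full backward orbit, form its $\alpha$-limit set, observe that $\omega_f$ is constant along the backward orbit, and use lower semicontinuity at a point $y\in\alpha$ to land a point of $\omega_f(y)$ inside $\omega_f(x)\cap\alpha$, from which $x\rightarrow x$ follows by concatenating chains. The only cosmetic differences are that you prove $\omega_f(y)\subset\omega_f(x)$ by contradiction with $U=X\setminus\omega_f(x)$ where the paper argues directly with neighborhoods of a single $z\in\omega_f(y)$, and that you spell out the easy inclusion $CR(f)\subset S$, which the paper leaves implicit.
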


\begin{proof}
Given any $x\in S$, there is a sequence $x_i\in X$, $i\le0$, such that
\begin{itemize}
\item $x_0=x$,
\item $f(x_{i-1})=x_i$ for all $i\le0$. 
\end{itemize}
Let $\alpha$ be the set of $y\in X$ such that $\lim_{j\to\infty}x_{i_j}=y$ for some $0\ge i_1>i_2>\cdots$. Note that
\begin{itemize}
\item $\alpha$ is a closed subset of $X$ and satisfies $f(\alpha)=\alpha$,
\item $\lim_{i\to-\infty}d(x_i,\alpha)=0$.
\end{itemize}
We fix $y\in\alpha$ and a sequence $0\ge i_1>i_2>\cdots$ with $\lim_{j\to\infty}x_{i_j}=y$. Note that
\[
\omega_f(x)=\omega_f(x_{i_j})
\]
for all $j\ge1$. As $\omega_f(y)\subset\alpha$, by taking $z\in\omega_f(y)$, we obtain $z\in\alpha$. Since $y\in LSC(\omega_f)$ and $z\in\omega_f(y)$, for any open subset $U$ of $X$ with $z\in U$, we have
\[
\omega_f(x)\cap U=\omega_f(x_{i_j})\cap U\ne\emptyset
\]
for some $j\ge1$. It follows that $z\in\omega_f(x)$. By $z\in\omega_f(x)\cap\alpha$, we obtain $x\rightarrow z$ and $z\rightarrow x$, which implies $x\rightarrow x$, i.e., $x\in CR(f)$. Since $x\in S$ is arbitrary, we obtain $S\subset CR(f)$; therefore, $S=CR(f)$, completing the proof.
\end{proof}

By Lemma 6.1, we prove Theorem 6.1.

\begin{proof}[Proof of Theorem 6.1]
Let $S=\bigcap_{i\ge0}f^i(X)$ and note that $S$ is connected. If $X=LSC(\omega_f)$, then by Lemma 6.1, we have $S=CR(f)$. As $CR(f)$ is totally disconnected and so is $S$, $S$ is a singleton. This implies that $X=CC(f)$, thus the theorem has been proved.
\end{proof}

\section{Examples}

In this section, we present several examples to illustrate the results in this paper.

\begin{ex}
\normalfont
Let $\sigma\colon\{0,1\}^\mathbb{Z}\to\{0,1\}^\mathbb{Z}$ be the shift map. We define $x=(x_i)_{i\in\mathbb{Z}},y=(y_i)_{i\in\mathbb{Z}}$ by
\begin{itemize}
\item $x_i=0$ for all $i\in\mathbb{Z}$,
\item $y_0=1$ and $y_i=0$ for all $i\in\mathbb{Z}\setminus\{0\}$.
\end{itemize}
Let $X=\{x\}\cup\{\sigma^i(y)\colon i\in\mathbb{Z}\}$ and let $f=\sigma|_X\colon X\to X$. It holds that
\begin{itemize}
\item $X=C(\omega_f)$ because $\omega_f(z)=\{x\}$ for all $z\in X$,
\item $\omega_f(x)=\{x\}$ and $\Omega_f(x)=X$,
\item $Sh(f)=\emptyset$.
\end{itemize}
\end{ex}

\begin{ex}
\normalfont
Let $X=[0,1]$ and let $f\colon X\to X$ be the map defined as $f(x)=x^2$ for all $x\in X$. It holds that
\begin{itemize}
\item $X=Sh(f)$,
\item $USC(\omega_f)=LSC(\omega_f)=X\setminus\{1\}$,
\item $\omega_f(1)=\{1\}$ and $\Omega_f(1)=X$,
\item $\omega_f(x)=\Omega_f(x)=\{0\}$ for all $x\in X\setminus\{1\}$.
\end{itemize}
\end{ex}

\begin{ex}
\normalfont
Let $\sigma\colon\{0,1\}^\mathbb{Z}\to\{0,1\}^\mathbb{Z}$ be the shift map. We define $x=(x_i)_{i\in\mathbb{Z}},y=(y_i)_{i\in\mathbb{Z}},z=(z_i)_{i\in\mathbb{Z}}$ by
\begin{itemize}
\item $x_i=0$ for all $i\in\mathbb{Z}$,
\item $y_0=1$ and $y_i=0$ for all $i\in\mathbb{Z}\setminus\{0\}$,
\item $z_{n^2}=1$ for all $n\ge0$ and $z_i=0$ for all $i\in\mathbb{Z}\setminus\{n^2\colon n\ge0\}$.
\end{itemize}
Let $Y=\{x\}\cup\{\sigma^i(y)\colon i\in\mathbb{Z}\}$ and let $X=Y\cup\{\sigma^i(z)\colon i\in\mathbb{Z}\}$. Let $f=\sigma|_X\colon X\to X$. It holds that
\begin{itemize}
\item $z\in C(\omega_f)$,
\item $\omega_f(z)=\Omega_f(z)=Y$,
\item $Y$ is not a minimal set for $f$,
\item $USC(\omega_f)=X\setminus Y$ and $LSC(\omega_f)=X$,
\item $Sh(f)=\emptyset$.
\end{itemize}
\end{ex}

\begin{ex}
\normalfont
Let $Y=\{y\in\mathbb{C}\colon|y|=1\}$ and let $X=Y\times[0,1]$. Let $f\colon X\to X$ be the map defined by $f(y,t)=y\cdot e^{2\pi it}$ for all $(y,t)\in X$. It holds that
\begin{itemize}
\item $USC(\omega_f)=\{(y,t)\in X\colon t\not\in\mathbb{Q}\}$ and $LSC(\omega_f)=X$,
\item $\omega_f((1,0))=(1,0)$ and $\Omega_f((1,0))=Y\times\{0\}$,
\item $Sh(f)=\emptyset$.
\end{itemize}
\end{ex}

\begin{ex}
\normalfont
Let $X=\mathbb{R}^2/\mathbb{Z}^2$ and let $f\colon X\to X$ be the map induced from
\[
A=
\begin{pmatrix}
2 & 1 \\
1 & 1 \\
\end{pmatrix}
.
\]
Let $S=\{x\in X\colon\omega_f(x)=X\}$ and note that $S$ is a dense $G_\delta$-subset of $X$ because $f$ is transitive. It holds that
\begin{itemize}
\item $X=Sh(f)$ because $f$ is an Anosov diffeomorphism,
\item $\Omega_f(x)=X$ for all $x\in X$,
\item $USC(\omega_f)=S$ and $LSC(\omega_f)=\emptyset$.
\end{itemize}
\end{ex}

\begin{ex}
\normalfont
Let $X=\{x\in\mathbb{C}\colon|x|=1\}$. Let $\alpha\in[0,1)\setminus\mathbb{Q}$ and let $f\colon X\to X$ be the map defined by $f(x)=x\cdot e^{2\pi i\alpha}$ for all $x\in X$. It holds that
\begin{itemize}
\item $X$ is connected,
\item $X=C(\omega_f)$ because $\omega_f(x)=X$ for all $x\in X$,
\item $Sh(f)=\emptyset$.
\end{itemize}
\end{ex}

\begin{ex}
\normalfont
Given a continuous map $f\colon X\to X$, let $\Omega(f)$ denote the {\em non-wandering set}  for $f$:
\[
\Omega(f)=\{x\in X\colon x\in\Omega_f(x)\}.
\]
Note that $\Omega(f)\subset CR(f)$. Let $X=\{x\in\mathbb{C}\colon|x|=1\}$ and let $f\colon X\to X$ be an orientation-preserving homeomorphism such that
\begin{itemize}
\item the rotation number of $f$ is irrational, 
\item $f$ is not transitive
\end{itemize}
(e.g.\:the Denjoy example). It holds that
\begin{itemize}
\item $X$ is connected,
\item $\Omega(f)$ is a Cantor set and so totally disconnected,
\item $\Omega(f)$ is a minimal set for $f$,
\item $X=C(\omega_f)$ because $\omega_f(x)=\Omega(f)$ for all $x\in X$,
\item $X=CR(f)$,
\item $Sh(f)=\emptyset$.
\end{itemize}
\end{ex}

\appendix

\section{}

The aim of this appendix is to prove the following theorem.

\begin{thm}
For a continuous map $f\colon X\to X$, if $X=USC(\omega_f)$, then $X=LSC(\omega_f)$.
\end{thm}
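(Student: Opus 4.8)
The plan is to derive lower semicontinuity from the hypothesis in two stages. First I would extract the following structural fact: if $X=USC(\omega_f)$, then $\omega_f(x)$ is a minimal set for $f$ for every $x\in X$. Fix $x\in X$ and $z\in\omega_f(x)$; since $\omega_f(x)$ is closed, $f$-invariant, and contains $z$, we have $\omega_f(z)\subseteq\omega_f(x)$. For the reverse inclusion, pick $n_k\to\infty$ with $f^{n_k}(x)\to z$. Each $f^{n_k}(x)$ lies on the forward orbit of $x$, so $\omega_f(f^{n_k}(x))=\omega_f(x)$; applying upper semicontinuity of $\omega_f$ \emph{at the point $z$} to the sequence $f^{n_k}(x)\to z$ shows that $\omega_f(x)=\omega_f(f^{n_k}(x))$ is eventually contained in every open neighborhood of $\omega_f(z)$, whence $\omega_f(x)\subseteq\omega_f(z)$. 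Thus $\omega_f(z)=\omega_f(x)$ for all $z\in\omega_f(x)$, i.e.\ $\omega_f(x)$ is minimal.

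Second, I would conclude via Lemma 3.1: it suffices to prove $\omega_f(x)\subseteq\omega_f(y)$ for all $x\in X$ and $y\in\Omega_f(x)$. Given such $x,y$, choose $x_j\to x$ and $0\le i_1<i_2<\cdots$ with $f^{i_j}(x_j)\to y$, and (after passing to a subsequence) let $A$ be the Hausdorff limit of $\omega_f(x_j)$ in $\mathcal{K}(X)$. Then $A\ne\emptyset$, and $f(A)=A$ because $S\mapsto f(S)$ is $d_H$-continuous and each $\omega_f(x_j)$ is $f$-invariant. Upper semicontinuity at $x$ forces $A\subseteq\omega_f(x)$, so $A$ is a nonempty closed $f$-invariant subset of the minimal set $\omega_f(x)$, giving $A=\omega_f(x)$. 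Since also $\omega_f(f^{i_j}(x_j))=\omega_f(x_j)\to A$ with $f^{i_j}(x_j)\to y$, upper semicontinuity at $y$ gives $A\subseteq\omega_f(y)$. Hence $\omega_f(x)=A\subseteq\omega_f(y)$, and Lemma 3.1 yields $X=LSC(\omega_f)$. One could equally well avoid Lemma 3.1 and argue directly: failure of lower semicontinuity at some $x_0$ would produce $x_j\to x_0$ with $\omega_f(x_j)$ staying out of a fixed open set meeting $\omega_f(x_0)$, and a Hausdorff limit of a subsequence would be a nonempty closed $f$-invariant proper subset of $\omega_f(x_0)$, contradicting minimality.

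I expect the main obstacle to be the first stage: the realization that \emph{global} upper semicontinuity of $\omega_f$ already forces every $\omega$-limit set to be minimal is the crux of the matter, and once this is in hand the rest is a routine compactness-and-minimality argument. The technical subtlety to watch is that the upper semicontinuity hypothesis must be used at the points $z\in\omega_f(x)$ (and at $y\in\Omega_f(x)$), not merely at $x$, together with the elementary observation that $\omega_f$ is constant along forward orbits.
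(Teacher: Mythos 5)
Your proposal is correct and follows essentially the same route as the paper: both first deduce from global upper semicontinuity that every $\omega$-limit set is minimal (using USC at points of $\omega_f(x)$ together with the constancy of $\omega_f$ along forward orbits), and then verify the hypothesis of Lemma 3.1 for every $y\in\Omega_f(x)$. The only cosmetic difference is in the second stage, where the paper proves $\omega_f(x)\cap\omega_f(y)\ne\emptyset$ via disjoint neighborhoods (Lemma A.1) and upgrades to containment by minimality, whereas you pass to a Hausdorff limit $A$ of the sets $\omega_f(x_j)$ and identify $A=\omega_f(x)\subset\omega_f(y)$; both arguments hinge on the same observation about a point near $x$ whose iterate is near $y$.
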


Let us first prove the following simple lemma.

\begin{lem}
Let $f\colon X\to X$ be a continuous map. For any $x\in X$ and $y\in\Omega_f(x)$, if
\[
\{x,y\}\subset USC(\omega_f),
\]
then $\omega_f(x)\cap\omega_f(y)\ne\emptyset$.
\end{lem}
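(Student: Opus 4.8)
The plan is to use the sequences witnessing $y\in\Omega_f(x)$ together with the shift‑invariance of $\omega$‑limit sets, so that the two upper semicontinuity hypotheses can be applied to one and the same sequence of points; no chain machinery is needed.

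First I would unwind the hypothesis $y\in\Omega_f(x)$: fix $x_j\in X$ with $\lim_{j\to\infty}x_j=x$ and integers $0\le i_1<i_2<\cdots$ with $\lim_{j\to\infty}f^{i_j}(x_j)=y$. The elementary fact I will lean on is that $\omega_f(f^{n}(z))=\omega_f(z)$ for all $z\in X$ and $n\ge0$, since discarding finitely many terms of a forward orbit does not change its set of accumulation points; in particular $\omega_f(f^{i_j}(x_j))=\omega_f(x_j)$ for every $j\ge1$.

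Next, fix $\epsilon>0$ and consider the open neighborhoods $\{z\in X\colon d(z,\omega_f(x))<\epsilon\}$ and $\{z\in X\colon d(z,\omega_f(y))<\epsilon\}$ of $\omega_f(x)$ and $\omega_f(y)$ respectively. Applying upper semicontinuity of $\omega_f$ at $x$ to the points $x_j\to x$ yields $\omega_f(x_j)\subset B_\epsilon(\omega_f(x))$ for all large $j$, and applying upper semicontinuity at $y$ to the points $f^{i_j}(x_j)\to y$ yields $\omega_f(x_j)=\omega_f(f^{i_j}(x_j))\subset B_\epsilon(\omega_f(y))$ for all large $j$. Since $\omega_f(x_j)\ne\emptyset$, choosing $u_\epsilon\in\omega_f(x_j)$ for such a $j$ produces a point with $d(u_\epsilon,\omega_f(x))\le\epsilon$ and $d(u_\epsilon,\omega_f(y))\le\epsilon$.

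Finally I would take $\epsilon=1/n$, set $u_n=u_{1/n}$, pass to a convergent subsequence $u_{n_k}\to u$ by the compactness of $X$, and use that $\omega_f(x)$ and $\omega_f(y)$ are closed to conclude $d(u,\omega_f(x))=d(u,\omega_f(y))=0$, hence $u\in\omega_f(x)\cap\omega_f(y)$. I do not expect a genuine obstacle here — this is the ``simple lemma'' the author advertises — the only point needing (minor) care being the transfer of the conclusion of upper semicontinuity at $y$ back onto the points $x_j$, which is exactly what the invariance identity $\omega_f(f^{i_j}(x_j))=\omega_f(x_j)$ supplies; note that both hypotheses $x\in USC(\omega_f)$ and $y\in USC(\omega_f)$ are genuinely used, one for each side of the intersection.
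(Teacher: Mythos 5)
Your proof is correct and rests on the same key mechanism as the paper's: applying upper semicontinuity at both $x$ and $y$ to a single point $z$ near $x$ whose iterate $f^{i}(z)$ is near $y$, transferred via the invariance $\omega_f(f^{i}(z))=\omega_f(z)$. The only difference is packaging --- the paper argues by contradiction, separating $\omega_f(x)$ and $\omega_f(y)$ by disjoint open sets, whereas you extract a point of the intersection directly through $\epsilon$-neighborhoods and a compactness limit; both are sound.
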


\begin{proof}
Assume the contrary, i.e., $\omega_f(x)\cap\omega_f(y)=\emptyset$. Then, there are open subsets $U,V$ of $X$ such that $\omega_f(x)\subset U$, $\omega_f(y)\subset V$, and $U\cap V=\emptyset$. Since $\{x,y\}\subset USC(\omega_f)$, there is $\delta>0$ such that
\begin{itemize}
\item $d(x,z)\le\delta$ implies $\omega_f(z)\subset U$ for all $z\in X$, 
\item $d(y,w)\le\delta$ implies $\omega_f(w)\subset V$ for all $w\in X$.
\end{itemize}
Since $y\in\Omega_f(x)$, we have
\[
\max\{d(x,z),d(y,f^i(z))\}\le\delta
\]
for some $z\in X$ and $i\ge0$. It follows that $\omega_f(z)\subset U$ and
\[
\omega_f(z)=\omega_f(f^i(z))\subset V,
\]
which imply $\omega_f(z)\subset U\cap V$ and thus $U\cap V\ne\emptyset$, a contradiction. This completes the proof of the lemma.
\end{proof}

By using Lemma A.1, we prove Theorem A.1.

\begin{proof}[Proof of Theorem A.1]
Assume that $X=USC(\omega_f)$. For any $x\in X$, we take a minimal set $M$ for $f$ such that $M\subset\omega_f(x)$. Fix $p\in M$ and note that $\omega_f(p)=M$. Let $U$ be an open subset of $X$ with $M\subset U$. Since $p\in USC(\omega_f)$, there is $\delta>0$ such that $d(p,z)\le\delta$ implies $\omega_f(z)\subset U$ for all $z\in X$. Since $p\in\omega_f(x)$, we have $d(p,f^i(x))\le\delta$ for some $i\ge0$. It follows that
\[
\omega_f(x)=\omega_f(f^i(x))\subset U.
\]
Since $U$ is arbitrary, we obtain $\omega_f(x)\subset M$ and thus $\omega_f(x)=M$. For any $y\in\Omega_f(x)$, as $\{x,y\}\subset USC(\omega_f)$, by Lemma A.1, we have 
\[
M\cap\omega_f(y)=\omega_f(x)\cap\omega_f(y)\ne\emptyset
\]
and so
\[
\omega_f(x)=M\subset\omega_f(y).
\]
Because $y\in\Omega_f(x)$ is arbitrary, by Lemma 3.1, we obtain $x\in LSC(\omega_f)$. Since $x\in X$ is arbitrary, we conclude that $X=LSC(\omega_f)$, completing the proof of the theorem. 
\end{proof}

We shall recall the definition of $\omega$-chaos \cite{L}. Let $f\colon X\to X$ be a continuous map. A subset $S$ of $X$ is said to be an {\em $\omega$-scrambled set} for $f$ if for any $x,y\in S$ with $x\ne y$,
\begin{itemize}
\item $\omega_f(x)\setminus\omega_f(y)$ is an uncountable set,
\item $\omega_f(x)\cap\omega_f(y)\ne\emptyset$,
\item $\omega_f(x)$ is not contained in the set of periodic points for $f$.
\end{itemize}
We say that $f$ is {\em $\omega$-chaotic} if there is an uncountable $\omega$-scrambled set for $f$.

\begin{rem}
\normalfont
Given a continuous map $f\colon X\to X$, the above proof of Theorem A.1 shows that if $X=USC(\omega_f)$, then $\omega_f(x)$ is a minimal set for $f$ for all $x\in X$; therefore, $\omega_f(x)\cap\omega_f(y)=\emptyset$ or $\omega_f(x)=\omega_f(y)$ for all $x,y\in X$. It follows that if $X=USC(\omega_f)$, then there is no $\omega$-scrambled set for $f$ that is not a singleton, and thus $f$ is not $\omega$-chaotic.
\end{rem}

\end{document}